\numberwithin{equation}{section}
\newcommand{\ds}{\displaystyle}
\newcommand{\Z}{\mathbb{Z}}
\newcommand{\N}{\mathbb{N}}
\newcommand{\ext}{\text{ext}}
\newcommand{\s}{\hspace{0.08cm}}
\newtheorem{theorem}{Theorem}[section]
\newtheorem{proposition}[theorem]{Proposition}
\newtheorem{conjecture}[theorem]{Conjecture}
\theoremstyle{definition}
\newtheorem{example}[theorem]{Example}
\newtheorem*{remark*}{Remark}
\def\@settitle{
	\begin{center}
		\baselineskip\p@
		\normalfont
		\@title
	\end{center}
}
\def\Ddots{\mathinner{\mkern1mu\raise\p@
\vbox{\kern7\p@\hbox{.}}\mkern2mu
\raise4\p@\hbox{.}\mkern2mu\raise7\p@\hbox{.}\mkern1mu}}
\date{}
\title{\textbf{Enumerating Permutations and Rim Hooks Characterized by Double Descent Sets}}
\author{\textbf{Christopher Zhu}\\\textbf{The Roxbury Latin School}}
\begin{document}
	\maketitle
	\vspace{-0.7cm}

	\vspace{0.5cm}
	
	\begin{center}
	    \textbf{Abstract}
	\end{center}
	
	 Let $dd(I;n)$ denote the number of permutations of $[n]$ with double descent set $I$. For singleton sets $I$, we present a recursive formula for $dd(I;n)$ and a method to estimate $dd(I;n)$. We also discuss the enumeration of certain classes of rim hooks. Let $\mathcal{R}_I(n)$ denote the set of all rim hooks of length $n$ with double descent set $I$, so that any tableau of one of these rim hooks corresponds to a permutation with double descent set $I$. We present a formula for the size of $\mathcal{R}_I(n)$ when $I$ is a singleton set, and we also present a formula for the size of $\mathcal{R}_I(n)$ when $I$ is the empty set. We additionally present several conjectures about the asymptotics of certain ratios of $dd(I;n)$.
	
	\vspace{1cm}
	
	\setstretch{0.55}\tableofcontents
	
	\setstretch{1.4}
	
	\section{Introduction}
	 Throughout this paper, we let $I$ be a finite set of positive integers. We will also use the standard notation $[n]$ to represent the set $\{1, 2, ..., n\}$, and for $m<n$, we let $[m, n]$ represent the set $\{m, m+1, ..., n\}$.
	
	 Consider the symmetric group $\mathfrak{S}_n$ of permutations $w = [w_1,w_2,...,w_n]$ of $[n]$. A \textit{descent} of $w$ is an index $i$ satisfying $w_i > w_{i+1}$, and the \textit{descent set} of $w$ is $$\text{Des}(w) := \{i\s|\s i\text{ is a descent of }w\} \subseteq [n-1].$$ For example, $\text{Des}([1,7,3,2,6,4,5]) = \{2, 3, 5\}$. Next, consider the set of all permutations of $[n]$ with a given descent set, $$D(I;n) := \{w \in \mathfrak{S}_n \s|\s \text{Des}(w) = I \},$$ and its cardinality $$d(I;n) := \#D(I;n).$$ 
	
	 Using the Principle of Inclusion and Exclusion, MacMahon \cite{mac} proved in 1915 that $d(I;n)$ is in fact a polynomial in $n$, for a fixed finite set $I$. We call $d(I;n)$ the \textit{descent polynomial} of $I$. For the next century, little detailed work was done on these descent polynomials, until Diaz-Lopez et al. \cite{DLHIOS} published a paper on them in 2017. In that paper, Diaz-Lopez et al. provide recursions for $d(I;n)$ and extensively study algebraic properties of descent polynomials. Some of their results include a theorem about the positivity of coefficients of $d(I;n)$ when expressed in a Newton basis, as well as bounds on roots of descent polynomials.

	 Along the lines of descents, we can also define a \textit{peak} of a permutation $w$ as an index $i$ satisfying $w_{i-1} < w_i > w_{i+1}$. Analogously, we can define the peak set of a permutation $w$ as $$\text{Peak}(w) := \{i \s|\s i \text{ is a peak of }w\} \subseteq [2,n-1].$$ Following this definition is $P(I;n) := \{w \in \mathfrak{S}_n \s|\s \text{Peak}(w) = I \}$. In 2013, Billey et al. \cite{bbs} studied $\#P(I;n)$ as a function of $n$ for fixed $I$ and showed that in general it is not polynomial, but of the form $p(I;n)2^{n-\#I-1}$, where $p(I;n)$ is a polynomial. This is called the \textit{peak polynomial} of $I$. Billey et al. also presented a recursion for $p(I;n)$, and studied formulas for $p(I;n)$ given a specific set $I$.
	
	 We now move on to \textit{double descents}, which we investigate in this paper. A double descent of a permutation $w$ is an index satisfying $w_{i-1} > w_i > w_{i+1}$. Next, we define $$\text{DDes}(w) := \{i \s|\s i \text{ is a double descent of }w \} \subseteq [2,n-1],$$ and analogously, $$DD(I;n) := \{w\in \mathfrak{S}_n \s|\s \text{DDes}(w) = I \},$$ and $dd(I;n) := \#DD(I;n)$. For example, $DD(\{2\}; 4) = \{[3,2,1,4], [4,3,1,2], [4,2,1,3]\}$, so $dd(\{2\}; 4) = 3$. 
	
	 The paper is structured as follows. We start off in Section \ref{woddes} where we discuss known results about permutations without double descents. After that, we discuss permutations with singleton double descent sets in Section \ref{singletonddes}. In particular, we present a recursion for $dd(I;n)$ for singleton $I = \{k\}$, which allows us to express $dd(\{k\};n)$ in terms of $dd(\{l\};m)$ for $l<k$ and $m<n$. We also discuss a method for estimating values of $dd(I; n)$ again for singleton sets $I$. In the next Section (\ref{rimhooks}), we analyze certain classes of rim hooks associated with singleton and empty double descent sets, and we also provide theorems regarding the sizes of these classes of rim hooks. While discussing rim hooks, we develop the theory of \textit{minimal elements}, which is useful in several proofs. Afterwards, we quickly take a look at \textit{circular permutations} in Section \ref{other}, another permutation-associated object (just like rim hooks). Then, in Section \ref{conjs}, we bring up conjectures obtained from studying patterns in computer-generated data. Most importantly, we discuss a conjecture that highlights a large difference between descents and double descents, as well as the so-called ``down up down up" conjecture which reveals an interesting pattern in data concerning singleton double descent sets. Finally, we conclude with a section on future research questions.

	\section{Permutations Without Double Descents}\label{woddes}
	
	 In this section, we begin our discussion of permutations and double descents by discussing current results in the literature. We start off by considering the specific case of permutations with no double descents and no initial descent, which will build up to permutations with no double descents in general. That is, we are considering all $w \in \mathfrak{S}_n$ such that $\text{DDes}(w) = \emptyset$ and $w_1 < w_2$. We will use $b_n$ to denote the number of such permutations in $\mathfrak{S}_n$. On OEIS \cite{b_n}, Michael Somos presents the following recursion for the sequence $b_n$, which is useful for finding a generating function for $b_n$. 
	\begin{proposition}[\cite{somos}]\label{2.1}
		 The function $b_n$ satisfies the following recursion: $$b_{n+1} = \ds \Bigg(\sum_{k=0}^n\dbinom{n}{k}b_kb_{n-k}\Bigg) - b_n.$$
	\end{proposition}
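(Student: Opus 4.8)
The plan is to prove the identity by exhibiting a bijection that decomposes a permutation counted by $b_{n+1}$ at the position of its largest entry. Fix $n \ge 1$ and let $w \in \mathfrak{S}_{n+1}$ satisfy $\text{DDes}(w) = \emptyset$ and $w_1 < w_2$. Since $w_1 < w_2$, the maximal value $n+1$ cannot occupy the first position, so it sits in some position $p$ with $2 \le p \le n+1$. I would then split $w$ into the prefix $\sigma = w_1 \cdots w_{p-1}$ and the suffix $\tau = w_{p+1} \cdots w_{n+1}$, discarding the entry $n+1$ itself. After standardization the value sets of $\sigma$ and $\tau$ form an ordered set partition of $[n]$ into blocks of sizes $p-1 \ge 1$ and $n+1-p \ge 0$.

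The core of the argument is to check that $\sigma$ and $\tau$ are again permutations with no double descents and no initial descent, and conversely that any such pair reassembles uniquely. First I would verify that neither block contains a double descent: any three consecutive entries of $\sigma$ or of $\tau$ are three consecutive entries of $w$, so a double descent inside a block would already be a double descent of $w$. The prefix inherits $\sigma_1 = w_1 < w_2 = \sigma_2$ when $|\sigma| \ge 2$, and the condition is vacuous when $|\sigma| = 1$, so $\sigma$ has no initial descent. The essential point—and the step I expect to be the main obstacle—is the suffix: because $w_p = n+1$ is the global maximum we have the descent $w_p > w_{p+1}$, and the hypothesis that $w$ has no double descent at the index $p+1$ (which is a legitimate index exactly when $\tau$ has at least two entries) forces $w_{p+1} < w_{p+2}$. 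Thus $\tau_1 < \tau_2$, so $\tau$ also has no initial descent; the degenerate cases where $\tau$ is empty or a singleton must be recorded separately but are immediate.

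For the inverse map, given an ordered pair $(\sigma, \tau)$ of permutations of complementary subsets $S \sqcup T = [n]$, each with no double descent and no initial descent and with $S \ne \emptyset$, I would form $w = \sigma \,(n{+}1)\, \tau$. One checks that $w$ has no double descent by noting that the only new consecutive triples straddle the inserted maximum; since $n+1$ exceeds both neighbors, the sole candidate window is $(n{+}1),\tau_1,\tau_2$, and this fails to be a double descent precisely because $\tau_1 < \tau_2$. Since $S \ne \emptyset$ guarantees $w_1 < w_2$, the assembled $w$ is counted by $b_{n+1}$, and the two constructions are visibly mutually inverse.

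Finally I would assemble the count. The bijection shows $b_{n+1} = \sum_{k=1}^{n} \binom{n}{k} b_k b_{n-k}$, where $k = p-1 = |S|$ runs over the admissible prefix sizes. Adjoining the missing $k=0$ term, whose value is $\binom{n}{0} b_0 b_n = b_n$, gives $\sum_{k=0}^{n} \binom{n}{k} b_k b_{n-k} = b_{n+1} + b_n$, which rearranges to the claimed recursion. The one place to tread carefully is exactly this boundary term: the $k=0$ summand corresponds to placing $n+1$ first, the unique way to violate $w_1 < w_2$, and it is precisely this excluded family of size $b_n$ that must be subtracted.
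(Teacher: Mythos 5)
Your argument is correct and complete. Note that the paper itself supplies no proof of this proposition --- it is quoted from Somos's entry on the OEIS --- so there is no in-paper argument to compare against; but your decomposition at the position of the maximal entry $n+1$ is exactly the device the paper uses later in its proof of Theorem \ref{recursion1}, and all the delicate points are handled properly: the suffix $\tau$ inherits ``no initial descent'' because a descent $\tau_1 > \tau_2$ together with $w_p = n+1 > \tau_1$ would create a double descent of $w$ at index $p+1$; the inverse map only needs to rule out a double descent in the single window $(n+1,\tau_1,\tau_2)$; and the subtracted $b_n$ is correctly identified as the $k=0$ summand, i.e.\ the permutations of $[n+1]$ with no double descents that begin with $n+1$ (equivalently, $n+1$ followed by a permutation counted by $b_n$). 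One small observation worth keeping explicit: your restriction to $n \ge 1$ is not cosmetic --- at $n=0$ the stated recursion would give $b_1 = b_0^2 - b_0 = 0$ rather than $1$ (consistent with the differential equation $B' = B^2 - B + 1$ for the exponential generating function, where the $+1$ only affects the constant term), so the identity genuinely holds only for $n \ge 1$ and your proof correctly covers exactly that range.
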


	On the same OEIS reference to Somos' recurrence, Peter Bala provides an  exponential generating function for $b_n$. This is useful for computing $dd(\emptyset; n)$.
	
	\begin{proposition}[\cite{bala}]\label{2.2}
		The exponential generating function for $b_n$ is $\dfrac{1}{2} + \dfrac{\sqrt{3}}{2}\tan\left(\dfrac{\sqrt{3}}{2}x + \dfrac{\pi}{6}\right)$.
	\end{proposition}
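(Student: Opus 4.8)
The plan is to pass from the recursion of Proposition \ref{2.1} to an ordinary differential equation satisfied by the exponential generating function $B(x) := \sum_{n \ge 0} b_n \tfrac{x^n}{n!}$, solve that ODE, and match the initial data. First I would record the base values $b_0 = b_1 = 1$, which follow at once from the definition: the empty permutation and the single permutation $[1]$ each vacuously avoid both double descents and an initial descent.

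Next I would translate the recursion termwise. The binomial convolution $\sum_{k=0}^n \binom nk b_k b_{n-k}$ is exactly the coefficient sequence of $B(x)^2$, since the EGF of a binomial convolution is the product of the EGFs; the shifted term $b_{n+1}$ is the coefficient sequence of $B'(x)$; and $b_n$ is the coefficient sequence of $B(x)$ itself. The one point requiring care is the index $n=0$: the recursion of Proposition \ref{2.1} holds for $n \ge 1$ but fails at $n = 0$, where it would predict $b_1 = b_0^2 - b_0 = 0$ rather than the correct value $b_1 = 1$. This single discrepancy of $1$ in the constant term means the generating-function identity picks up an additive constant, yielding the Riccati equation
\begin{equation*}
B'(x) = B(x)^2 - B(x) + 1, \qquad B(0) = 1.
\end{equation*}
I expect getting this inhomogeneous $+1$ correct to be the main obstacle. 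Dropping it would give $B' = B^2 - B$, whose solution involves logarithms rather than a tangent, so the boundary bookkeeping at $n = 0$ is precisely what forces the trigonometric answer.

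Finally I would solve the ODE. Since it is autonomous it is separable, and completing the square as $B^2 - B + 1 = (B - \tfrac12)^2 + \tfrac34$ reduces the problem to the integral $\int \tfrac{dB}{(B - 1/2)^2 + 3/4}$, which is an arctangent. Inverting gives
\begin{equation*}
B(x) = \frac12 + \frac{\sqrt3}{2}\tan\!\left(\frac{\sqrt3}{2}x + C\right),
\end{equation*}
and imposing $B(0) = 1$ forces $\tan(C) = 1/\sqrt3$, i.e. $C = \pi/6$, which is exactly the claimed expression. As a sanity check, repeatedly differentiating the ODE at $x = 0$ should reproduce $b_0 = b_1 = b_2 = 1$ and $b_3 = 3$, confirming the solution is the right branch.
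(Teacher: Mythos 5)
Your proposal is correct, and in fact it supplies something the paper does not: Proposition \ref{2.2} is cited from OEIS (Bala) without any proof in the text, so there is no argument of the paper's to compare against. Your derivation is the natural one and is carried out soundly. In particular, you are right to flag the $n=0$ boundary case: with $b_0=b_1=1$, the recursion of Proposition \ref{2.1} predicts $b_1=b_0^2-b_0=0$, so it only holds for $n\ge 1$ (a caveat the paper's statement of Proposition \ref{2.1} omits), and accounting for that single discrepancy is exactly what produces the inhomogeneous term in the Riccati equation $B'=B^2-B+1$, $B(0)=1$. Completing the square and separating variables then gives $B(x)=\tfrac12+\tfrac{\sqrt3}{2}\tan\bigl(\tfrac{\sqrt3}{2}x+C\bigr)$, and $B(0)=1$ forces $\tan C=1/\sqrt3$, i.e.\ $C=\pi/6$, as claimed; your sanity check via $B'(0)=1$, $B''(0)=1$, $B'''(0)=3$ confirms the branch. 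The one small point worth making explicit in a final write-up is why the formal power-series solution of the ODE is unique (each coefficient $b_{n+1}$ is determined by $b_0,\dots,b_n$), so that matching the analytic solution's Taylor coefficients at $0$ genuinely identifies it with the EGF.
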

	
	The following recursion, which relates $dd(\emptyset; n)$ and $b_n$, is given by Emanuele Munarini on OEIS \cite{a(n)}.
	
	\begin{proposition}[\cite{munarini}]
		The function $dd(\emptyset, n)$ satisfies the following recursion: $$dd(\emptyset; n+1) = \ds\sum_{k=0}^n\dbinom{n}{k}\cdot dd(\emptyset, k)\cdot b_{n-k}.$$
	\end{proposition}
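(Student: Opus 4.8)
The plan is to give a bijective proof keyed on the position of the largest entry. First I would recast the definitions: since an index $i$ is a double descent exactly when both $i-1$ and $i$ are descents, a permutation lies in $DD(\emptyset;n)$ precisely when it has no two \emph{consecutive} descents, and $b_n$ counts those permutations that additionally have no descent in position $1$ (i.e.\ $w_1 < w_2$). Now fix $w \in \mathfrak{S}_{n+1}$ with $\text{DDes}(w) = \emptyset$ and let $p$ be the position of the maximal value $n+1$. Writing $w = u\,(n{+}1)\,v$ with $u = w_1\cdots w_{p-1}$ and $v = w_{p+1}\cdots w_{n+1}$, I would regard $u$ and $v$ as patterns on their respective value sets; the descents of $w$ internal to $u$ and to $v$ are exactly the descents of these patterns, so the entire argument reduces to understanding the two junctions at $n+1$.

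The crux is the asymmetry of these two junctions. Because $n+1$ is maximal, the step $w_{p-1}\to n+1$ is always an \emph{ascent}; hence no descent of $w$ sits immediately to the left of position $p$, and whatever descent $u$ may carry at its right end cannot extend into a double descent. Consequently $u$ may be \emph{any} no-double-descent pattern of its length $k := p-1$, contributing a factor $dd(\emptyset;k)$. Dually, the step $n+1 \to w_{p+1}$ is always a \emph{descent}, so to avoid a double descent at the following index, $v$ must begin with an ascent, that is, have no initial descent, while still avoiding two consecutive descents internally. Thus $v$ ranges exactly over the $b$-type patterns of length $n-k$, contributing a factor $b_{n-k}$. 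Choosing which $k$ of the $n$ non-maximal values populate $u$ accounts for the binomial coefficient $\binom{n}{k}$, and summing over $k = 0, \dots, n$ (equivalently over $p = 1, \dots, n+1$) yields the claimed identity.

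The main obstacle, and the part I would write most carefully, is this junction analysis together with the boundary cases: when $p = 1$ the block $u$ is empty ($dd(\emptyset;0) = 1$) and the whole tail is forced to be a $b$-type block; when $p = n+1$ the block $v$ is empty ($b_0 = 1$) and $u$ is an unconstrained no-double-descent permutation of $[n]$; and one must confirm singleton blocks behave correctly via $b_1 = 1$. I would verify that $w \mapsto (k, \text{value set of } u, \text{pattern of } u, \text{pattern of } v)$ is a genuine bijection by exhibiting the inverse $(n{+}1)$-insertion and checking that the only new adjacencies it creates are the ascent $w_{p-1}\to n+1$ and the descent $n+1 \to w_{p+1}$, so that the sole extra constraint is precisely ``$v$ has no initial descent.'' As a consistency check one can observe that the recursion is exactly the coefficient form of the exponential generating function relation $A'(x) = A(x)B(x)$, where $A$ and $B$ are the EGFs of $dd(\emptyset;\cdot)$ and $b$, which also dovetails with Proposition \ref{2.2}; and a quick numerical test ($dd(\emptyset;3) = 1\cdot b_2 + 2\cdot b_1 + 1\cdot 2\cdot b_0 = 5$) confirms the bookkeeping.
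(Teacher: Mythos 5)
The paper itself offers no proof of this proposition---it is quoted from OEIS with a citation to Munarini---so there is nothing internal to compare against; your argument stands on its own and it is correct and complete. Your decomposition at the position of the maximum entry $n+1$, with the key asymmetry that the left junction is automatically an ascent (so the prefix is an unrestricted no-double-descent pattern, giving $dd(\emptyset;k)$) while the right junction is automatically a descent (forcing the suffix to have no initial descent and no double descents, giving $b_{n-k}$), is exactly the right mechanism, and your handling of the boundary cases $p=1$, $p=n+1$, and singleton blocks is careful where it needs to be. It is worth noting that this is precisely the technique the paper deploys later in the proof of Theorem \ref{recursion1}: the first summand $\binom{n}{k}\cdot dd(I;k)\cdot b_{n-k}$ there arises from the identical ``insert $n+1$ and split into a left block and a $b$-type right block'' analysis, so your proof also serves as the base case of that method. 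The EGF cross-check against Proposition \ref{2.2} (the recursion being the coefficient form of $A'=AB$) is a nice independent confirmation.
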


	This recursion, along with Proposition \ref{2.2}, can be used to prove the formula for the exponential generating function of $dd(\emptyset; n)$ given by Noam Elkies on OEIS \cite{a(n)}.
	
	\begin{proposition}[\cite{elkies}]
		The exponential generating function for $dd(\emptyset; n)$ is $\dfrac{\frac{\sqrt{3}}{2}\cdot e^{\frac{x}{2}}}{\cos\left(\frac{\sqrt{3}}{2}x + \frac{\pi}{6}\right)}$.
	\end{proposition}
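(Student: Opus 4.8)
The plan is to translate the recursion of the preceding proposition into an ordinary differential equation for the exponential generating function. Write $A(x) = \sum_{n \geq 0} dd(\emptyset;n)\,\frac{x^n}{n!}$ for the EGF we seek, and let $B(x) = \frac{1}{2} + \frac{\sqrt{3}}{2}\tan\left(\frac{\sqrt{3}}{2}x + \frac{\pi}{6}\right)$ be the EGF of $b_n$ supplied by Proposition \ref{2.2}. The right-hand side of the recursion $dd(\emptyset; n+1) = \sum_{k=0}^n \binom{n}{k} dd(\emptyset;k)\, b_{n-k}$ is exactly the binomial convolution of the sequences $\big(dd(\emptyset;k)\big)$ and $(b_k)$, so its EGF is the product $A(x)B(x)$. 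On the left-hand side, the index shift $n \mapsto n+1$ corresponds to differentiation, since $A(x) = \sum a_n x^n/n!$ gives $A'(x) = \sum a_{n+1} x^n/n!$. Hence the recursion is equivalent to the first-order equation $A'(x) = A(x)B(x)$.

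The second step is to solve this separable equation. Since $A(0) = dd(\emptyset;0) = 1 \neq 0$, I would write $A'(x)/A(x) = B(x)$ and integrate to obtain $A(x) = \exp\left(\int_0^x B(t)\,dt\right)$. It then remains only to evaluate the integral. The constant term contributes $\int_0^x \frac{1}{2}\,dt = \frac{x}{2}$, while for the tangent term the substitution $u = \frac{\sqrt{3}}{2}t + \frac{\pi}{6}$ reduces $\int \frac{\sqrt{3}}{2}\tan\left(\frac{\sqrt{3}}{2}t + \frac{\pi}{6}\right)dt$ to $\int \tan u\,du = -\log\cos u$. Evaluating between $0$ and $x$ and using $\cos\frac{\pi}{6} = \frac{\sqrt{3}}{2}$ gives $\int_0^x B(t)\,dt = \frac{x}{2} - \log\cos\left(\frac{\sqrt{3}}{2}x + \frac{\pi}{6}\right) + \log\frac{\sqrt{3}}{2}$. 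Exponentiating then yields $A(x) = \frac{\sqrt{3}}{2}\cdot\frac{e^{x/2}}{\cos\left(\frac{\sqrt{3}}{2}x + \frac{\pi}{6}\right)}$, which is the claimed formula.

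I do not expect any serious obstacle here: the only genuinely nontrivial input is Proposition \ref{2.2}, which is assumed, and the rest is a standard generating-function argument. The one place that requires care is bookkeeping the constant of integration so that $A(0)=1$ is respected — this is precisely what forces the term $\log\frac{\sqrt{3}}{2}$ and hence the prefactor $\frac{\sqrt{3}}{2}$ in the answer. A minor preliminary point worth checking is that the Munarini recursion holds starting from $n=0$ under the conventions $dd(\emptyset;0)=1$ and $b_0=1$, so that the passage to the equation $A' = AB$ captures the entire sequence rather than just its tail.
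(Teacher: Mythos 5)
Your proof is correct and follows exactly the route the paper indicates (the paper does not write out a proof, but states that the Munarini recursion together with Proposition \ref{2.2} yields the formula, which is precisely the argument you carry out). The translation of the binomial convolution into $A'=AB$, the integration, and the bookkeeping of the constant via $A(0)=dd(\emptyset;0)=1$ are all handled correctly.
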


	These results provide most of the background on permutations whose double descent set is the empty set. We now proceed to study permutations which have singleton double descent sets.
	
	\section{Singleton Double Descent Sets}\label{singletonddes}
	 The main enumeration theorem of this section is the following recursion for $dd(I;n)$ when $I$ is a singleton set.
	
	\begin{theorem}\label{recursion1}
		Let $I = \{m\}$ be a singleton set. Then we have 
		\begin{equation}\label{eq1}
		\begin{split}	
		dd(I; n+1) = & \Bigg(\sum_{k = m+1}^n \dbinom{n}{k}\cdot dd(I; k)\cdot b_{n-k}\Bigg) \\
		& + \dbinom{n}{m-2}\cdot dd(\emptyset;m-2)\cdot \big(dd(\emptyset;n-m+2) - b_{n-m+2}\big) \\
		& + \Bigg(\sum_{k=0}^{m-4}\dbinom{n}{k}\cdot dd(\emptyset;k)\cdot 
		c(\{m-1-k\}; n-k)\Bigg)
		\end{split}
		\end{equation}
		where $c(I; n)$ denotes the number of permutations in $\mathfrak{S}_n$ with an initial ascent and with double descent set $I$.
	\end{theorem}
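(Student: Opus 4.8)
The plan is to prove the recursion by classifying each permutation $w \in \mathfrak{S}_{n+1}$ with $\text{DDes}(w) = \{m\}$ according to the position $p$ of its largest entry $n+1$. Writing $w = (w_1, \dots, w_{p-1}, n+1, w_{p+1}, \dots, w_{n+1})$, let $\ell \in \mathfrak{S}_{p-1}$ and $r \in \mathfrak{S}_{n+1-p}$ be the standardizations of the left block $(w_1, \dots, w_{p-1})$ and the right block $(w_{p+1}, \dots, w_{n+1})$. Choosing which $p-1$ of the remaining $n$ values form the left block can be done in $\binom{n}{p-1}$ ways, and this choice together with $\ell$ and $r$ recovers $w$. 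The first step is to record how the double descents of $w$ decompose under this splitting. Since $w_p = n+1$ is the global maximum, neither position $p$ nor position $p-1$ can be a double descent (both would require an entry exceeding $n+1$), and a direct check on the remaining interior positions gives the disjoint decomposition
\[
\text{DDes}(w) = \text{DDes}(\ell) \;\cup\; \big(\{p+1\}\text{ if } r_1 > r_2\big) \;\cup\; \big(p + \text{DDes}(r)\big),
\]
where $p + S := \{p+s : s \in S\}$, $\text{DDes}(\ell) \subseteq [2,p-2]$ and $p + \text{DDes}(r) \subseteq [p+2,n]$. The only delicate point is the boundary position $p+1$: because $w_p = n+1 > w_{p+1}$ holds automatically, position $p+1$ is a double descent exactly when the right block opens with a descent $r_1 > r_2$.

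With this decomposition in hand I would impose $\text{DDes}(w) = \{m\}$ and split into cases according to $p$. If $p \ge m+2$, i.e.\ $k := p-1 \ge m+1$, then $m$ lies in the interior of the left block, so we must have $\text{DDes}(\ell) = \{m\}$ and, to avoid any double descent at or beyond position $p+1$, the right block must satisfy $r_1 < r_2$ and $\text{DDes}(r) = \emptyset$; such right blocks are counted by $b_{n-k}$ and the left blocks by $dd(\{m\};k)$, producing the first sum. The positions $p = m+1$ and $p = m$ contribute nothing, since then $m$ equals $p-1$ or $p$, which are never double descents.

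If $p = m-1$ (so $k = m-2$), the required double descent at $m = p+1$ can only arise from an initial descent of $r$, which forces $r_1 > r_2$ together with $\text{DDes}(r) = \emptyset$; such right blocks number $dd(\emptyset; n-m+2) - b_{n-m+2}$, and combined with $\text{DDes}(\ell) = \emptyset$ (counted by $dd(\emptyset; m-2)$) this yields the second term. Finally, if $p \le m-2$ (so $k \le m-3$), the double descent at $m$ must lie in the interior of $r$, forcing $\text{DDes}(r) = \{m-1-k\}$ and $\text{DDes}(\ell) = \emptyset$, while $r$ must carry an initial ascent since an initial descent would create an unwanted double descent at $p+1 < m$; these right blocks are exactly those counted by $c(\{m-1-k\}; n-k)$, giving the third sum.

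The main obstacle, and the point that aligns the bookkeeping with the stated formula, is the upper limit of the last sum: a priori the final case runs over $0 \le k \le m-3$, but the term $k = m-3$ requires a right block with an initial ascent and $\text{DDes}(r) = \{2\}$, and a double descent at position $2$ means $r_1 > r_2 > r_3$, which is incompatible with $r_1 < r_2$. Hence $c(\{2\}; n-k) = 0$ and the $k = m-3$ term vanishes, so the sum may be truncated at $m-4$ as written. Assembling the contributions over all $p$ then produces exactly \eqref{eq1}; the only genuine care needed is in verifying the structural decomposition at the three boundary positions $p-1$, $p$, $p+1$ and in tracking which right blocks are required to open with an ascent versus a descent.
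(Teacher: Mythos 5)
Your proposal is correct and follows essentially the same approach as the paper: both classify $w \in \mathfrak{S}_{n+1}$ by the position $p$ of the entry $n+1$ and obtain the same three cases ($p \ge m+2$, $p = m-1$, and $p \le m-3$). The only cosmetic difference is that the paper excludes $p = m-2$ up front by noting it would force a double descent at $m-1$, whereas you include it in the third case and observe that $c(\{2\};n-k) = 0$ kills that term --- the same fact phrased as a vanishing count rather than an a priori exclusion.
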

	
	\begin{proof}
		To construct a permutation $w \in \mathfrak{S}_{n+1}$ with a double descent at $m$, we first consider possible values of $w^{-1}(n+1)$. Because there is a double descent at $m$, we have $w_{m-1} > w_m > w_{m+1}$, so $w^{-1}(n+1) \notin $ $\{m, m+1\}$ because all other $w_i < n+1$. Also, $w^{-1}(n+1)\neq m-2$; otherwise, there would be a double descent at $w_{m-1}$ since we have $w_{m-1} > w_m$. Thus, $w^{-1}(n+1) \in [m+2, n+1] \cup \{m-1\} \cup [m-3]$. Suppose $w^{-1}(n+1) \in [m+2, n+1]$. Then we can choose $k \in [m+1, n]$ elements of $[n]$ to form a permutation to the left of $n+1$ with a double descent at $m$, and the remaining $n-k$ elements of $[n]$ form a permutation to the right of $n+1$ with no initial descent and no double descents. For a given $k$, there are $$\binom{n}{k}\cdot dd(I; n) \cdot b_{n-k}$$ ways to do this, so summing over all valid $k$ gives the first term of \eqref{eq1}. Next, suppose $w^{-1}(n+1) = m-1$. Then we must have a permutation of length $m-2$ to the right of $n+1$ with no double descents, and a permutation of length $n - (m-2)$ to the right of $w$ with an initial descent (which contributes to the double descent at $w_m$) but no double descents. There are $$\binom{n}{m-2}\cdot dd(\emptyset, n-m+2) \cdot\big(dd(\emptyset;n-m+2) - b_{n-m+2}\big)$$ such permutations, where the last term counts the number of permutations with an initial descent but no double descents. This gives the second term of \eqref{eq1}. Finally, suppose $w^{-1}(n+1) \in$ $[m-3]$. Then we can choose $0 \leq k \leq m-4$ elements to the right of $n+1$ to form a permutation with no double descents, and the remaining $n-k$ elements form a permutation with a double descent at $m-1-k$ (which is the $m$th spot in the entire permutation $w \in \mathfrak{S}_{n+1}$) and no initial descent. For a given $k$, there are $$\binom{n}{k}\cdot dd(\emptyset; k) \cdot c(\{m-1-k\};n-k)$$ ways to do this. Summing over all valid $k$ gives the third and final term of \eqref{eq1}.
	\end{proof}
	
	We do not have too much information on $c(I;n)$ so far, but because it consists of elements of $DD(I;n)$, it seems to follow a nice pattern, which is summed up in the following conjecture.
	
	\begin{conjecture}\label{c(n)estimate}
		The limit $\lim_{n \rightarrow \infty}\frac{c(\{m\}; n)}{dd(\{m\}; n)}$ exists for a fixed $m \in \N$. That is, we can estimate $c(\{m\}; n)$ as $dd(\{m\}; n)\cdot C(m)$, where $C(m)$ is some constant depending on $m$. Estimates for the first few values of $C(m)$ are: 
		\emph{\begin{center}
				\begin{tabular}{|c|c|c|c|c|c|c|c|c|c|} 
					\hline
					$m$ & 3 & 4 & 5 & 6 & 7 & 8 & 9 \\ 
					\hline
					$C(m)$ & 1 & 0.3941 & 0.6362 & 0.5056 & 0.5676 & 0.5359 & 0.5515 \\ 
					\hline
				\end{tabular}
		\end{center}}
	\end{conjecture}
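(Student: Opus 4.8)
The plan is to pass from the combinatorial quantities to their exponential generating functions and apply singularity analysis, the point being to show that $dd(\{m\};n)$ and $c(\{m\};n)$ are governed by the \emph{same} dominant singularity with the same local order, so that their ratio converges to the ratio of the corresponding singular coefficients. Write $B(x)=\sum_{n\ge 0} b_n x^n/n!$ (Proposition \ref{2.2}) and $D(x)=\sum_{n\ge 0} dd(\emptyset;n)x^n/n! = \frac{\frac{\sqrt3}{2}e^{x/2}}{\cos\!\left(\frac{\sqrt3}{2}x+\frac{\pi}{6}\right)}$, the latter being the generating function of \cite{elkies}. The crucial structural fact, which is Munarini's recursion read through generating functions, is that $D'(x)=B(x)D(x)$, equivalently $D'(x)/D(x)=B(x)$; moreover $1/D(x)=\frac{2}{\sqrt3}e^{-x/2}\cos\!\left(\frac{\sqrt3}{2}x+\frac{\pi}{6}\right)$ is entire. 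The smallest positive singularity of $D$ is the simple pole $x_0=\frac{2\pi}{3\sqrt3}$, where the cosine first vanishes, and one checks $D(x)\sim e^{x_0/2}/(x_0-x)$ there.

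First I would record, for each fixed $m$, that the recursion \eqref{eq1} becomes a first–order linear ODE for $F_m(x):=\sum_{n\ge 0} dd(\{m\};n)\,x^n/n!$. Indeed the left side $dd(\{m\};n+1)$ produces $F_m'$, the first sum is a binomial convolution producing $B(x)F_m(x)$ (the truncation $k\ge m+1$ costs nothing, since $dd(\{m\};k)=0$ for $k\le m$), and the remaining two terms assemble into an explicit source $R_m(x)$. Writing $G_{m'}(x):=\sum_n c(\{m'\};n)x^n/n! = D(x)\widetilde E_{m'}(x)$, supplied inductively, one finds
\begin{equation*}
F_m'(x)=B(x)F_m(x)+R_m(x),\qquad R_m(x)=dd(\emptyset;m-2)\,\tfrac{x^{m-2}}{(m-2)!}\bigl(D(x)-B(x)\bigr)+D(x)\sum_{k=0}^{m-4} dd(\emptyset;k)\,\tfrac{x^k}{k!}\,\widetilde E_{m-1-k}(x).
\end{equation*}
Since $D'/D=B$, the integrating factor is $1/D$, and with $F_m(0)=0$ we obtain the closed form $F_m(x)=D(x)\,E_m(x)$ with $E_m(x)=\int_0^x R_m(t)/D(t)\,dt$. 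The same argument applied to the analogous recursion for $c(\{m\};n)$ — which I would derive by the same splitting at the position of the largest entry, now carrying the extra initial–ascent constraint on the leftmost block — yields $G_m(x)=D(x)\widetilde E_m(x)$ with $\widetilde E_m=\int_0^x \widetilde R_m/D$ for a companion source $\widetilde R_m$.

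The key point is then that $R_m/D$ and $\widetilde R_m/D$ are analytic at $x_0$: the first piece contributes $\frac{x^{m-2}}{(m-2)!}\,dd(\emptyset;m-2)\,(1-B/D)$, and $B/D\to e^{-x_0/2}$ is finite at $x_0$, while the second piece is $D\times(\text{analytic})$ divided by $D$. Consequently $E_m$ and $\widetilde E_m$ extend analytically to $x_0$, the class ``$D(x)\times(\text{analytic at }x_0)$'' is closed under the construction, and the induction on $m$ goes through, with base case $m=3$, where the fact that a double descent at $3$ without one at $2$ forces an initial ascent gives $c(\{3\};n)=dd(\{3\};n)$ and hence $C(3)=1$. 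Each of $F_m,G_m$ therefore has a simple pole at $x_0$, so by the transfer theorem $dd(\{m\};n)\sim e^{x_0/2}E_m(x_0)\,n!/x_0^{n+1}$ and $c(\{m\};n)\sim e^{x_0/2}\widetilde E_m(x_0)\,n!/x_0^{n+1}$, whence the limit exists with
\begin{equation*}
C(m)=\frac{\widetilde E_m(x_0)}{E_m(x_0)}.
\end{equation*}

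The main obstacle is the bookkeeping that keeps the induction inside the right class: carefully deriving the companion recursion for $c(\{m\};n)$, verifying that every source term is of the form $D(x)$ times a function analytic at $x_0$ (so that no logarithmic or higher–order singularity appears and the two generating functions share the \emph{same} local order), and establishing $E_m(x_0)\neq 0$ so that the ratio is a finite number — this last point should follow from positivity of $dd(\{m\};n)$. As a consistency check, as $m\to\infty$ the prefix structure should wash out and $C(m)$ should approach $\lim_n b_n/dd(\emptyset;n)=e^{-x_0/2}\approx 0.546$, in agreement with the oscillation of the tabulated values.
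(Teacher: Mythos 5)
First, a point of calibration: the paper does not prove this statement --- it appears as Conjecture \ref{c(n)estimate} and is supported only by numerically averaging the ratios $c(\{m\};n)/dd(\{m\};n)$ up to $n=12$. So you are not reconstructing an argument from the paper; you are proposing one where none exists. Your skeleton is sound, and the computations you actually carry out check out: Munarini's recursion does give $D'=BD$; $1/D$ is entire, so $D$ has a single simple pole at $x_0=\tfrac{2\pi}{3\sqrt3}$ in the disk $|x|<2x_0$ with $D(x)\sim e^{x_0/2}/(x_0-x)$; the truncation $k\ge m+1$ in the first sum of \eqref{eq1} is indeed vacuous; your $R_m$ is the correct exponential-generating-function transcription of the second and third terms of \eqref{eq1}; and $B/D=e^{-x/2}\bigl(\tfrac{1}{\sqrt3}\cos u+\sin u\bigr)$ with $u=\tfrac{\sqrt3}{2}x+\tfrac{\pi}{6}$ is entire, so $R_m/D$ is analytic at $x_0$ modulo the inductive hypothesis. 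The base case $c(\{3\};n)=dd(\{3\};n)$ is also correct, and the heuristic $C(m)\to e^{-x_0/2}\approx 0.546$ agrees with the oscillation in the table --- a sanity check the paper itself does not make.

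That said, two gaps are genuine and must be closed before this is a proof. First, the companion recursion for $c(\{m\};n)$ is the linchpin --- the entire induction runs through $G_{m'}=D\widetilde E_{m'}$ --- and you never derive it. It does appear to exist: redo the case analysis on $w^{-1}(n+1)$ from the proof of Theorem \ref{recursion1}, exclude $w^{-1}(n+1)=1$, and impose the initial-ascent constraint on the leftmost block; this replaces $dd(\emptyset;m-2)$ by $b_{m-2}$ in the second term, replaces $dd(\emptyset;k)$ by $b_k$ and starts the last sum at $k=1$ in the third, and yields $G_m'=BG_m+\widetilde R_m$ with $\widetilde R_m/D$ again analytic at $x_0$. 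But this has to be written out and verified, including the check that \eqref{eq1} and its companion hold for \emph{all} $n$ (with out-of-range terms vanishing), so that the ODEs are identities of formal power series rather than eventual recurrences. Second, ``$E_m(x_0)\neq 0$ should follow from positivity of $dd(\{m\};n)$'' is not an argument: positive coefficients are perfectly consistent with $E_m(x_0)=0$ and a lower-order asymptotic. The fix is available, though: $b_n\le dd(\emptyset;n)$ coefficientwise gives $1-B/D\ge 0$ on $[0,x_0)$, and by induction every summand of $R_m/D$ and $\widetilde R_m/D$ is nonnegative there with the first strictly positive on $(0,x_0)$, so $E_m(x_0)=\int_0^{x_0}R_m/D>0$. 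With those two items supplied, the singularity analysis does deliver the conjecture, together with the exact expression $C(m)=\widetilde E_m(x_0)/E_m(x_0)$ --- which would be strictly more than the paper claims.
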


	For a fixed $m$, the values of $\frac{c(\{m\}; n)}{dd(\{m\}; n)}$ oscillate as $n$ increases, and they appear to converge to some limit. Thus, we can accurately estimate $C(m)$ by averaging the values of $\frac{c(\{m\}; n)}{dd(\{m\}; n)}$ across $n$. We estimated the values in the table above by averaging $\frac{c(\{m\}; n)}{dd(\{m\}; n)}$ (for a fixed $m$) up to $n = 12$, as the values of $\frac{c(\{m\}; n)}{dd(\{m\}; n)}$ were already within $0.001$ of each other by $n=7$.
	
	We just computed some values of  for small $n$ and averaged them to produce the estimates of $C(m)$ in the table above.
	
	\begin{example}
		Using Theorem \ref{recursion1} and assuming Conjecture \ref{c(n)estimate}, we can estimate the value $dd(\{m\}; n)$. Suppose $n=9$ and $m=6$. Then the theorem gives 
	 	\begin{align*}
		dd(\{6\}, 9) = & \s \sum_{k=7}^8\dbinom{8}{k}\cdot dd(\{6\}; k)\cdot b_{8-k} + \dbinom{8}{4}\cdot dd(\emptyset;4)\cdot\big(dd(\emptyset;4)-b_4\big)\\
		+ & \s\sum_{k=0}^2\dbinom{8}{k}\cdot dd(\emptyset;k)\cdot c(\{5-k\};8-k) \\ = & \s \dbinom{8}{7}\cdot 426 \cdot 1 + \dbinom{8}{8}\cdot 2491\cdot 1 + \dbinom{8}{4}\cdot17\cdot(17-9) + \sum_{k=0}^2\dbinom{8}{k}\cdot dd(\emptyset;k)\cdot c(\{5-k\};8-k).
		\end{align*}
	\end{example}
	
	Using the estimation given by the conjecture, we can simplify this to
	\begin{align*}
	= & \s 15419 + \sum_{k=0}^2\dbinom{8}{k}\cdot dd(\emptyset;k)\cdot c(\{5-k\};8-k) \\ \approx &\s 15419 + \sum_{k=0}^2\dbinom{8}{k}\cdot dd(\emptyset;k)\cdot dd(\{5-k\}; 8-k) \cdot C(5-k) \\ = & \s 15419 + \dbinom{8}{0} \cdot 1 \cdot 2904 \cdot 0.6362 + \dbinom{8}{1} \cdot 1 \cdot 462 \cdot 0.3941 + \dbinom{8}{2} \cdot 2 \cdot 66 \cdot 1 \\ = & \s 22419.118.
	\end{align*}
	
	The actual value of $dd(\{6\}; 9)$ is 22419, so the estimate is off by $0.00053\%$. 
	
	\section{Rim Hooks}\label{rimhooks}
	 One important object associated with permutations, the rim hook, is brought up by considering permutations as \textit{rim hook tableaux}. Rim hooks are skew shapes that do not contain $2 \times 2$ squares. The following are examples of rim hooks:
	\vspace{0.3cm}
	\begin{center}
	    \ydiagram{1} \hspace{1cm} \ydiagram{1+2,2,1,1} \hspace{1cm}
	\ydiagram{1+1,1+1,2,1} \hspace{1cm} \ydiagram{1,1,1}
	\end{center}
	\vspace{0.3cm}
	
	We use the standard skew shape notation to represent these rim hooks. For example, the second rim hook from the above left is written as $(3,2,1,1)/(1)$, and the third rim hook from the above left is written as $(2,2,2,1)/(1,1)$. This notation is explained as follows: the first tuple of numbers represents the Young diagram which contains the rim hook. In the example of $(2,2,2,1)/(1,1)$, the numbers $(2,2,2,1)$ correspond to a Young diagram with 2 squares in the first row, 2 in the second, 2 in the third, and 1 in the fourth. The tuple of numbers after the slash represents the number of squares to remove from the rows of the specified Young diagram, in order to create the desired rim hook. So, the tuple $(1,1)$ in $(2,2,2,1)/(1,1)$ means we remove 1 square from the first row (starting on the left side of the row) of the aforementioned Young diagram, as well as 1 square from the second row, thus creating a rim hook. Also, the notation $|\mathfrak{r}|$ for a rim hook $\mathfrak{r}$ (and more generally, a skew shape) will denote the total number of squares in $\mathfrak{r}$.
	
	 A \textit{rim hook tableau} is formed by filling in the squares of a rim hook with the numbers $1$ through $n$, where $n$ is the number of squares in the rim hook, or the \textit{length} of the rim hook. A rim hook tableau also must satisfy the two following rules: for every two vertically adjacent squares, the upper square must contain the smaller number, and for every two horizontally adjacent squares, the left square must contain the smaller number. For example:
	\vspace{0.3cm}
	\begin{center}
	        \ytableaushort{\none312,45} \text{ is not a valid rim hook tableau, but } \ytableaushort{\none124,35} \text{ is valid.}
	\end{center}
	\vspace{0.3cm}
	
	Rim hooks can be used to encode the descent information of a permutation. This idea can be explained as follows: by reading a rim hook tableau from the bottom left to top right, following adjacent squares, we can reconstruct a permutation. For example, the above tableau on the right corresponds to the permutation [3,5,1,2,4] $\in \mathfrak{S}_5$. The rim hook of [3,5,1,2,4] precisely encodes a permutation in $\mathfrak{S}_5$ with a single descent at index 2. Any other permutation whose rim hook tableau has the same shape, such as [2,5,1,3,4], will have the same descents. Therefore, a rim hook of a certain shape will generate rim hook tableaux that correspond to permutations which all have the same descent set. This is how rim hooks can ``encode" descent sets (and analogously, double descent sets).
	
	 For example, the following are the rim hooks which generate permutations in $\mathfrak{S}_6$ with double descent set $\{2\}$: 
	\vspace{0.3cm}
	\begin{center}
	        \ydiagram{1+2,2,1,1} \hspace{1cm} \ydiagram{2+1,3,1,1} \hspace{1cm} \ydiagram{4,1,1}
	\end{center}
	\vspace{0.3cm}
	Some permutations with corresponding rim hook tableaux (to the rim hooks above, in that order) are [6,3,2,4,1,5], [5,4,1,2,6,3], and [4,3,2,1,5,6], all of which have a double descent at index 2.
	
	 We will use the notation $\mathcal{R}_I(n)$ to denote the set of all rim hooks of length $n$ which correspond to permutations with double descent set $I$. For example, the 3 rim hooks above are the elements of $\mathcal{R}_{\{2\}}(6)$. We can count the number of such rim hooks for singleton sets $I$ with the following formula.
	\begin{theorem}\label{rimhookrec}
	    $\#\mathcal{R}_{\{m\}}(n) = F_{n-m}F_{m-1}$, where $F_n$ is the $n$th Fibonacci number.
	\end{theorem}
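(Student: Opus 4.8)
The plan is to pass from rim hooks to binary words, under which double descents become pairs of consecutive equal letters, and then to reduce the count to a product of two classical Fibonacci-type counts.

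First I would record each rim hook of length $n$ by a word $s = s_1 s_2 \cdots s_{n-1}$ over the alphabet $\{V, H\}$, where $s_i = V$ if the $i$-th and $(i+1)$-th cells in the bottom-left-to-top-right reading are vertically adjacent and $s_i = H$ if they are horizontally adjacent. This is a \emph{bijection} between rim hooks of length $n$ (up to translation) and $\{V, H\}^{n-1}$. Because the upper cell of a vertical pair carries the smaller entry while the left cell of a horizontal pair carries the smaller entry, position $i$ is a descent of every tableau of the shape exactly when $s_i = V$; hence the descent set, and with it the double descent set, depends only on $s$. Consequently $i \in [2, n-1]$ is a double descent precisely when $s_{i-1} = s_i = V$, so $\#\mathcal{R}_{\{m\}}(n)$ equals the number of words $s \in \{V,H\}^{n-1}$ whose unique occurrence of two consecutive $V$'s sits at positions $(m-1, m)$.

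Next I would pin $s_{m-1} = s_m = V$ and isolate this pair. Whenever they exist (that is, for $m \geq 3$ and $m \leq n-2$), the letters $s_{m-2}$ and $s_{m+1}$ are forced to equal $H$, since otherwise a second double descent would appear at $m-1$ or $m+1$. With the interface letters pinned, the word decouples into a left block $s_1 \cdots s_{m-2}$ of length $m-2$ that must avoid two consecutive $V$'s and end in $H$, together with an independent right block $s_{m+1} \cdots s_{n-1}$ of length $n-m-1$ that must avoid two consecutive $V$'s and begin with $H$. Letting $a_\ell$ count words of length $\ell$ over $\{V,H\}$ with no two consecutive $V$'s, the recurrence $a_\ell = a_{\ell - 1} + a_{\ell-2}$ (conditioning on the last letter) gives $a_\ell = F_{\ell+2}$ with the convention $F_1 = F_2 = 1$; the number of such words ending in $H$ is $a_{\ell-1} = F_{\ell+1}$, and by the reflection $s \mapsto s_{n-1}\cdots s_1$ the same holds for words beginning with $H$. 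Applying this to the two blocks yields $F_{(m-2)+1} = F_{m-1}$ and $F_{(n-m-1)+1} = F_{n-m}$, and multiplying the independent choices gives $\#\mathcal{R}_{\{m\}}(n) = F_{m-1} F_{n-m}$.

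The part requiring the most care is the boundary bookkeeping, together with the degenerate cases $m = 2$ and $m = n-1$, in which the left or right block is empty; here one must check that the formula still returns the correct value $F_1 = 1$ for a block of length $0$ and that pinning the interface letters does not over-constrain the problem. A convenient sanity check is $m = 2$, $n = 6$, where the formula gives $F_4 F_1 = 3$, matching the three rim hooks of $\mathcal{R}_{\{2\}}(6)$ listed above.
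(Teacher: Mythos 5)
Your proof is correct, and it takes a genuinely different route from the paper's. The paper obtains the formula by combining two recursions: Proposition \ref{rhrecprop} extends rim hooks at their top-right end (splitting them into those ending in a horizontal versus a vertical domino) to get $\#\mathcal{R}_I(n) = \#\mathcal{R}_I(n-1) + \#\mathcal{R}_I(n-2)$ for $n \geq m+3$, and Proposition \ref{initalRH} runs the same extension argument at the bottom-left end to show the initial values $\#\mathcal{R}_{\{m\}}(m+1)$ themselves satisfy a Fibonacci recursion; the closed form then follows from the base cases $\#\mathcal{R}_{\{2\}}(3) = \#\mathcal{R}_{\{3\}}(4) = 1$. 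Your encoding of a rim hook as a word in $\{V,H\}^{n-1}$, with double descents read off as occurrences of the factor $VV$, bypasses both recursions: pinning the unique $VV$ at positions $(m-1,m)$ forces an $H$ on either side and splits the word into two independent $VV$-avoiding blocks, so the product $F_{m-1}F_{n-m}$ appears for a structural reason rather than as the solution of a recurrence. Your argument is self-contained, explains \emph{why} the answer factors, and the same encoding immediately handles arbitrary double descent sets (the double descent set of a shape is exactly the set of positions of $VV$ factors), so it would also subsume Proposition \ref{rhrecprop} and Theorem \ref{emptysetformula}; the paper's extension argument, in exchange, is the reusable computational tool it invokes later for general $I$. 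The only points needing care in your write-up are the degenerate cases $m=2$ and $m=n-1$, where one block is empty and must contribute $F_1 = 1$; you flag these explicitly, and they check out against $\#\mathcal{R}_{\{2\}}(3)=1$ and the three elements of $\mathcal{R}_{\{2\}}(6)$.
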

	To prove this theorem, we need the following 2 propositions which give recurrences for $\#\mathcal{R}_I(n)$.
	
	\begin{proposition}\label{rhrecprop}
	Let $m = \emph{\text{max}}(I \cup \{0\})$. For $n \geq m + 3$, we have $\#\mathcal{R}_I(n) = \#\mathcal{R}_I(n-1) + \#\mathcal{R}_I(n-2).$
	\end{proposition}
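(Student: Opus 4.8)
The plan is to translate the statement entirely into the language of binary words and then peel the last one or two letters off each word. First I would record the correspondence, already implicit in Section \ref{rimhooks}, between a rim hook of length $n$ and a step word. Tracing the rim hook from its bottom-left square to its top-right square, each of the $n-1$ transitions between consecutive squares is either horizontal (a step to the right, letter $H$) or vertical (a step upward, letter $V$); this gives a bijection between rim hooks of length $n$ and words $s = s_1 s_2 \cdots s_{n-1} \in \{H,V\}^{n-1}$. A horizontal step forces the next tableau entry to increase and a vertical step forces it to decrease, so index $j$ is a descent exactly when $s_j = V$, and hence $i$ is a double descent exactly when $s_{i-1} = s_i = V$. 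Thus
$$ \#\mathcal{R}_I(n) = \#\big\{\, s \in \{H,V\}^{n-1} : \{\, i : s_{i-1} = s_i = V \,\} = I \,\big\}, $$
and I will call the words counted on the right the \emph{valid} words of length $n-1$.

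Next I would split the valid words of length $n-1$ according to their final letter. If $s_{n-1} = H$, then deleting it produces a word of length $n-2$; since the deleted letter is an $H$ it lies in no $VV$ pattern, so the set of double-descent positions is unchanged, and this gives a bijection with the valid words of length $n-2$, that is, with $\mathcal{R}_I(n-1)$. If instead $s_{n-1} = V$, then because $n-1 > m = \text{max}(I \cup \{0\})$ there can be no double descent at the terminal position $n-1$, which forces $s_{n-2} = H$; hence every valid word ending in $V$ actually ends in $HV$. Deleting this $HV$ suffix produces a word of length $n-3$, and as neither deleted letter takes part in a $VV$ pattern the double-descent set is again preserved, giving a bijection with the valid words of length $n-3$, that is, with $\mathcal{R}_I(n-2)$. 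Adding the two cases yields $\#\mathcal{R}_I(n) = \#\mathcal{R}_I(n-1) + \#\mathcal{R}_I(n-2)$. I would close the argument by checking the inverse maps, appending $H$ in the first case and $HV$ in the second, which by the same $VV$-free reasoning send valid words of the shorter lengths back to valid words of length $n-1$, so that both maps are genuine bijections.

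The step demanding the most care, and the one that pins the hypothesis to $n \ge m+3$, is verifying that these deletions and insertions preserve the double-descent set \emph{exactly}. The subtle point is the behavior near the last possibly-constrained position, $m$: I must confirm that truncating the word never deletes a double descent required by $I$ nor creates a spurious one, and, crucially, that the shortened word still has its full range of double-descent indices available to realize all of $I$. A word of length $\ell$ admits double descents only at indices in $[2,\ell]$, so the $HV$-deletion (which shortens by two) requires $I \subseteq [2, n-3]$, i.e. $n \ge m+3$; this is the binding constraint, whereas the single $H$-deletion only needs $n \ge m+2$. Once these boundary checks are carried out carefully, the recurrence follows, and the empty-set case $m=0$ recovers the familiar statement that words of length $n-1$ avoiding $VV$ are counted by Fibonacci numbers.
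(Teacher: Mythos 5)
Your proof is correct and rests on the same idea as the paper's: a rim hook in $\mathcal{R}_I(n)$ must end either in a horizontal step or in a vertical step necessarily preceded by a horizontal one (since $n-1 \notin I$), and these two cases account for $\#\mathcal{R}_I(n-1)$ and $\#\mathcal{R}_I(n-2)$ respectively. The only real difference is direction of travel --- the paper extends rim hooks forward and tracks the counts of $H$- and $V$-ending hooks over two steps, while you encode rim hooks as $\{H,V\}$-words and delete suffixes, obtaining the recurrence as a single bijection and making the hypothesis $n \ge m+3$ do visible work.
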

	\begin{proof}
	All rim hooks must end in one of the two following shapes (i.e. these are their top right squares): 
	\vspace{0.3cm}
	\begin{center}
	        \begin{ytableau} \none & & \\
	        \none[\Ddots]
	        \end{ytableau}
	        \text{ or }
	        \begin{ytableau} \none &\\\none &\\
	        \none[\Ddots]
	        \end{ytableau}
	\end{center}
	\vspace{0.3cm}
	We will call rim hooks that end in the horizontal squares $H$-rim hooks and ones that end in vertical squares $V$-rim hooks. Now, suppose that $\mathcal{R}_I(n)$ contains $a$ $H$-rim hooks and $b$ $V$-rim hooks. To create a valid rim hook of $\mathcal{R}_I(n+1)$, we take rim hooks from $\mathcal{R}_I(n)$ and add an extra square, making sure not to create any additional double descents in the rim hooks. For example, the following shows valid and invalid extensions of a rim hook of $\mathcal{R}_{\{3\}}(7)$, where the shaded squares are the additional squares extending the rim hook (unshaded):
	\vspace{0.3cm}
	\begin{center}
	\begin{ytableau}
	\none & \none & \none & & *(green)\\
	\none & & & \\
	\none &\\
	&\\
	\end{ytableau}
	\begin{tabular}{c} 
    	\\\\\text{ is valid, but}
    \end{tabular}
	\begin{ytableau}
	\none & \none & \none & *(red)\\
	\none & \none & \none & \\
	\none & & & \\
	\none &\\
	&\\
	\end{ytableau}
	\begin{tabular}{c}
    	\\\\\text{is not.}
    \end{tabular}
	\end{center}
	\vspace{0.3cm}
	A valid extension of an $H$-rim hook can either be an extra square to the right or to the top of the top right end of the rim hook, so an $H$-rim hook can be respectively extended to a new $H$-rim hook and a new $V$-rim hook. For a $V$-rim hook, however, the only valid extension is the addition of one square to the right of the top right end of the rim hook, creating a new $H$-rim hook. Thus, if $\mathcal{R}_I(n)$ has $a$ $H$-rim hooks and $b$ $V$-rim hooks, then $\mathcal{R}_I(n+1)$ will have $a+b$ $H$-rim hooks and $a$ $V$-rim hooks, for a total of $\#\mathcal{R}_I(n+1) = 2a + b$ rim hooks. Applying this pattern again, we get $\#\mathcal{R}_I(n+2) = 3a+2b$, thus showing that the recursion $\#\mathcal{R}_I(n) = \#\mathcal{R}_I(n-1) + \#\mathcal{R}_I(n-2)$ holds.
	\end{proof}
	
	 This proposition shows that we can calculate any $\#\mathcal{R}_I(n)$ recursively, given the 2 initial values $\#\mathcal{R}_I(m+1)$ and $\#\mathcal{R}_I(m+2)$, where $m = \text{max}(I \cup \{0\})$. In order to prove Theorem \ref{rimhookrec} with the previous recursion, we need to determine initial values of $\mathcal{R}_{\{m\}}(n)$. The following proposition tells us what these initial values are.
	
	\begin{proposition}\label{initalRH}
	    For $m \geq 4$, we have $\#\mathcal{R}_{\{m\}}(m+1) = \#\mathcal{R}_{\{m-1\}}(m) + \#\mathcal{R}_{\{m-2\}}(m-1)$.
	\end{proposition}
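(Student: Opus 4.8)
The plan is to prove the identity by a direct bijective argument: I would partition $\mathcal{R}_{\{m\}}(m+1)$ according to the shape of its bottom-left end (equivalently, according to whether the associated permutations begin with an ascent or a descent) and match the two parts with $\mathcal{R}_{\{m-1\}}(m)$ and $\mathcal{R}_{\{m-2\}}(m-1)$ respectively.

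First I would record the dictionary between a rim hook and its descent data: reading bottom-left to top-right, the step from the $i$th cell to the $(i+1)$st is vertical exactly when $i$ is a descent, so a double descent at $i$ corresponds precisely to a pair of consecutive vertical steps at positions $i-1$ and $i$. For a rim hook in $\mathcal{R}_{\{m\}}(m+1)$ the length is $m+1$, so $m$ is the largest index at which a double descent can occur; hence the single double descent is forced to sit at the top-right, fed by the last two steps. The first (bottom-left) step is therefore either horizontal or vertical, which gives two disjoint and exhaustive cases. Moreover, since $m \ge 4$ there is no double descent at index $2$, so whenever the first step is vertical the second step must be horizontal.

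In the horizontal case I would delete the bottom-left cell. Since that cell lies to the left of the second cell in the same row, the remaining $m$ cells still form a valid rim hook, and removing the first step shifts every remaining index down by one: the double descent at $m$ moves to $m-1$, and no new double descent appears at the new index $2$ (that would require a double descent at old index $3$, impossible for $m \ge 4$). This lands in $\mathcal{R}_{\{m-1\}}(m)$, with inverse given by prepending a horizontal cell at the bottom-left. In the vertical case I would instead delete the bottom-left vertical domino, namely the first cell together with the cell directly above it (followed by a horizontal step); removing these leaves a valid rim hook on $m-1$ cells and shifts indices down by two, sending the double descent at $m$ to $m-2$ and landing in $\mathcal{R}_{\{m-2\}}(m-1)$, which is well defined precisely because $m-2 \ge 2$. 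Prepending the domino inverts this map.

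The step I expect to require the most care is verifying that each deletion is genuinely a bijection onto the entire target set: that the deletions never create a $2\times 2$ block nor sever connectivity, that they neither create nor destroy a double descent apart from the intended index shift, and that the prepending inverses really land back in $\mathcal{R}_{\{m\}}(m+1)$ (in particular that prepending a vertical domino does not spawn a spurious double descent at index $2$). Once these checks are in place, the two cases partition $\mathcal{R}_{\{m\}}(m+1)$ into subsets of sizes $\#\mathcal{R}_{\{m-1\}}(m)$ and $\#\mathcal{R}_{\{m-2\}}(m-1)$, yielding the claimed identity.
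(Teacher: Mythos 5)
Your proof is correct, and it is essentially the mirror image of the paper's argument: both work at the bottom-left end of the rim hook and both hinge on the same dichotomy of whether the rim hook begins with a horizontal or a vertical step. The paper runs the construction forward --- it splits $\mathcal{R}_{\{m\}}(m+1)$ into $a$ horizontally-starting and $b$ vertically-starting rim hooks, notes that prepending a square sends $(a,b)\mapsto(a+b,a)$ (a horizontal start admits two extensions, a vertical start only one), and iterates twice to get $\#\mathcal{R}_{\{m+2\}}(m+3)=3a+2b=(2a+b)+(a+b)$. You run it backward as a single deletion bijection: strip one cell from the horizontally-starting rim hooks to land in $\mathcal{R}_{\{m-1\}}(m)$, and strip the forced vertical-then-horizontal initial pair from the vertically-starting ones to land in $\mathcal{R}_{\{m-2\}}(m-1)$. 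Your packaging buys a direct proof of the identity exactly as stated, and your explicit inverse (prepending) maps make transparent the surjectivity that the paper leaves implicit in ``the same argument as in Proposition \ref{rhrecprop}''; the paper's packaging buys uniformity with that earlier proof and displays the Fibonacci structure of both recursions at once. The checks you flag --- that $s_1=V$ forces $s_2=H$ because $m\ge 4$ rules out a double descent at index $2$, that deletion shifts the double descent to $m-1$ or to $m-2\ge 2$ without creating a new one at the new index $2$, and that prepending really inverts both maps --- all go through routinely once a rim hook is identified with its bottom-left-to-top-right step sequence, so there is no gap.
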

	\begin{proof}
	    The argument in this proof is nearly the same as the one in Proposition \ref{rhrecprop}, except here we create extensions on the bottom left of a rim hook and not the top right. Also, in this scenario we will define $h$-rim hooks and $v$-rim hooks as rim hooks that \textit{start} with two horizontal or two vertical squares. Now, suppose $\mathcal{R}_{\{m\}}(m+1)$ consists of $a$ $h$-rim hooks and $b$ $v$-rim hooks. An extension of these rim hooks will increase the index of the descent by 1 and add 1 to the length of the rim hook, thereby creating an element of $\mathcal{R}_{\{m+1\}}(m+2)$. By the same argument as in Proposition \ref{rhrecprop}, $\mathcal{R}_{\{m+1\}}(m+2)$ will contain $a+b$ $h$-rim hooks and $a$ $v$-rim hooks, for a total of $2a + b$ elements. We also get $\#\mathcal{R}_{\{m+2\}}(m+3) = 3a + 2b$, thus showing the desired recursion is true.
	\end{proof}
	
	With Propositions \ref{rhrecprop} and \ref{initalRH}, we can now prove Theorem \ref{rimhookrec}.
	
	\begin{proof}[Proof of Theorem \ref{rimhookrec}] 
	    After brief computation we get that $\#\mathcal{R}_{\{2\}}(3) = 1$ and $\#\mathcal{R}_{\{3\}}(4) = 1$, so by Proposition \ref{initalRH}, we have $\#\mathcal{R}_{\{m\}}(m+1) = F_{m-1}$ for $m \geq 2$, where $F_n$ denotes the $n$th Fibonacci number. Now, for a fixed $m$, the smallest valid $n$ for which $\mathcal{R}_{\{m\}}(n)$ is defined is $m+1$, and the rim hooks in $\mathcal{R}_{\{m\}}(m+1)$ necessarily end in 3 vertical squares. Hence, there are no $H$-rim hooks (defined as in Proposition \ref{rhrecprop}) in $\mathcal{R}_{\{m\}}(m+1)$, so $\#\mathcal{R}_{\{m+1\}}(m+2)$ must equal $\#\mathcal{R}_{\{m\}}(m+1)$ because each $V$-rim hook in $\mathcal{R}_{\{m\}}(m+1)$ is extended to one new $H$-rim hook in $\#\mathcal{R}_{\{m+1\}}(m+2)$. Therefore, we have determined that $$\#\mathcal{R}_{\{m\}}(m+1) = \#\mathcal{R}_{\{m+1\}}(m+2) = F_{m-1}.$$
	    After applying the recursion from Proposition \ref{rhrecprop} to these initial values, we deduce Theorem \ref{rimhookrec}.
	\end{proof}
	
	 As we see, it is possible to calculate the size of any $\mathcal{R}_I(n)$ recursively, given two pre-computed initial values. However, there is a nicer non-recursive formula for the specific case $I = \emptyset$.
	
	\begin{theorem}\label{emptysetformula}
	Let $n \geq 2$, and let $F_n$ be the $n$th Fibonacci number. Then $\#\mathcal{R}_\emptyset(n) = \ds F_{n+1}$.
	\end{theorem}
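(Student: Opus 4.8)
The plan is to reduce the enumeration of $\mathcal{R}_\emptyset(n)$ to a count of binary words, and then recognize that count as a Fibonacci number. First I would make precise how a rim hook encodes descent data. Reading a rim hook of length $n$ from its bottom-left square to its top-right square, each of the $n-1$ steps between consecutive squares is either a rightward (horizontal) move or an upward (vertical) move, and this word of $H$/$V$ steps determines the shape of the rim hook uniquely. As the example $[3,5,1,2,4]$ in the text illustrates, a horizontal step corresponds to an ascent and a vertical step to a descent of every associated permutation; consequently an index is a double descent exactly when it lies between two consecutive vertical steps. Hence $\mathcal{R}_\emptyset(n)$, the set of rim hooks all of whose tableaux have empty double descent set, is in bijection with the words in $\{H,V\}^{n-1}$ containing no factor $VV$.

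Next I would count these words. A standard argument conditions on the final letter: a word of length $k$ with no $VV$ either ends in $H$, and deleting that $H$ leaves a valid word of length $k-1$, or ends in $V$, in which case the preceding letter is forced to be $H$, and deleting the final $HV$ leaves a valid word of length $k-2$. This gives the recurrence $a_k = a_{k-1} + a_{k-2}$ with $a_0 = 1$ and $a_1 = 2$, so $a_k = F_{k+2}$ with the convention $F_1 = F_2 = 1$. Taking $k = n-1$ then yields $\#\mathcal{R}_\emptyset(n) = F_{n+1}$, which is exactly the claim.

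Alternatively, and more in the spirit of the surrounding results, I can sidestep re-deriving the count by invoking Proposition \ref{rhrecprop} directly. For $I = \emptyset$ we have $m = \max(\emptyset \cup \{0\}) = 0$, so that proposition gives $\#\mathcal{R}_\emptyset(n) = \#\mathcal{R}_\emptyset(n-1) + \#\mathcal{R}_\emptyset(n-2)$ for all $n \geq 3$. This is precisely the Fibonacci recurrence, so it remains only to match initial conditions. I would check by hand that $\#\mathcal{R}_\emptyset(1) = 1$ (the single square) and $\#\mathcal{R}_\emptyset(2) = 2$ (a horizontal domino and a vertical domino), agreeing with $F_2 = 1$ and $F_3 = 2$. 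Since $F_{n+1}$ satisfies the same two-term recurrence with the same seed values, a one-line induction finishes the argument.

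The hard part will not be any computation but the bookkeeping in the reduction. Concretely, I must argue carefully that the $H$/$V$ step word is a genuine invariant of the rim hook shape, so that distinct shapes give distinct words and every word is realized, and that the geometric condition of two consecutive vertical steps is equivalent to a double descent of \emph{every} tableau of that shape, not merely of some tableau; this is exactly where the earlier assertion that a rim hook's shape alone determines the common double descent set of all its tableaux gets used. Once that equivalence is clean, both routes are routine, and the only remaining care is the index alignment $k = n-1$, giving $F_{k+2} = F_{n+1}$ in the direct count (or the seed check $\#\mathcal{R}_\emptyset(1)=F_2$, $\#\mathcal{R}_\emptyset(2)=F_3$ in the recurrence route).
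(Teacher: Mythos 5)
Your proposal is correct, but it proceeds quite differently from the paper. The paper's proof does not use Proposition \ref{rhrecprop} at all: it stratifies $\mathcal{R}_\emptyset(n)$ by height, shows via Proposition \ref{minimalbijection} that the rim hooks of height $h$ are exactly the extensions of the staircase minimal element $\mu(\emptyset,h)$ with $|\mu(\emptyset,h)| = 2h-2$, counts those extensions as weak compositions to get $\binom{n-h+1}{h-1}$, and then invokes the known diagonal-sum identity $\sum_{k}\binom{n-k+1}{k-1} = F_{n+1}$. Your two routes --- encoding a rim hook as a word in $\{H,V\}^{n-1}$ avoiding the factor $VV$ and running the standard last-letter recurrence, or equivalently specializing Proposition \ref{rhrecprop} to $m=0$ and checking $\#\mathcal{R}_\emptyset(1)=1$, $\#\mathcal{R}_\emptyset(2)=2$ --- both derive the Fibonacci recurrence directly and so avoid citing the binomial identity; they are really the same argument, since the $H$/$V$ word encoding is exactly the $H$-rim-hook/$V$-rim-hook dichotomy used to prove Proposition \ref{rhrecprop}. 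What your approach buys is a shorter, self-contained proof that needs only the (paper-supplied) fact that the shape of a rim hook determines the common double descent set of all its tableaux, plus the uniqueness of the step-word encoding, both of which you correctly flag as the points requiring care. What the paper's approach buys is the machinery of minimal elements and the refined count by height, which it reuses conceptually elsewhere; your route gives the total $F_{n+1}$ without that height-by-height breakdown. One small point to tighten in the word-counting route: state explicitly that a double descent at index $i$ corresponds to steps $i-1$ and $i$ both being vertical, so that ``no double descents'' for a length-$n$ rim hook is precisely ``no $VV$ factor'' in the length-$(n-1)$ word; with that, the index bookkeeping $a_{n-1} = F_{(n-1)+2} = F_{n+1}$ is exactly right.
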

	
	 Before we prove this theorem, we must first introduce the theory of \textit{minimal elements}. Define the \textit{height} of a rim hook (more generally, a Young diagram) to be the number of rows in the diagram. Then we define a \textit{minimal element of height h with double descent set I}, written as $\mu(I,h)$, as the rim hook of height $h$ that encodes double descent set $I$ and has the minimal number of squares possible. 
	
	For example, the following two rim hooks represent $\mu(\emptyset, 4)$ and $\mu(\{3\}, 5)$ respectively:
	
	\begin{center}
	        \ydiagram{2+1,1+2,2,1} \hspace{1cm} \ydiagram{3+1,2+2,1+2,1+1,2} \hspace{1cm}
	\end{center}
	
	 Minimal elements are useful because they allow us to quickly generate rim hooks by adding squares to the rows of a minimal element. The process of adding a square to a rim hook in general is as follows: to add a square to some row of a rim hook, just add a square to the right of the rightmost square in the specified row of the rim hook, and then shift all above rows to the right by 1.
	
	The following diagram demonstrates this process (added square is shaded):
	
	\vspace{0.3cm}
	\begin{center}
	\begin{ytableau}
    \none & \none & & &\\
	\none & &\\
	&\\
	\end{ytableau}
	\begin{tabular}{c} 
    	\\$\scalebox{1.5}{\ensuremath{\rightarrow}}$
    \end{tabular}
	\begin{ytableau}
    \none & \none & & &\\
	\none & & & *(green) \\
	&\\
	\end{ytableau}
    \begin{tabular}{c} 
    	\\$\scalebox{1.5}{\ensuremath{\rightarrow}}$
    \end{tabular}
	\begin{ytableau}
    \none & \none & \none & & &\\
	\none & & & \\
	&\\
	\end{ytableau}
	\end{center}
	\vspace{0.3cm}
	
	Now, notice that any rim hook can be decomposed into a minimal element, along with additional squares in some rows. For example, the above right rim hook is equivalent to $\mu(\emptyset, 3)$ with 2 added squares in the top row, 1 added square in the second row, and 1 added square in the bottom row. In the case that the double descent set of the rim hook is $\emptyset$, the double descent set of the minimal element will also be $\emptyset$. We formalize this argument as follows:
	
	\begin{proposition}\label{minimalbijection}
	Let $|\mu(I,h)|$ denote the number of squares in $\mu(I,h)$. Then we can construct all elements of $\mathcal{R}_\emptyset(n)$ of height $h$ by adding $n - |\mu(\emptyset,h)|$ squares to the rows of $\mu(\emptyset,h)$. Specifically, there is a bijection between the set of elements of $\mathcal{R}_\emptyset(n)$ of height $h$ and the set of all possible additions of $n - |\mu(\emptyset,h)|$ squares to $\mu(\emptyset,h)$.
	\end{proposition}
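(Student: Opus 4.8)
The plan is to translate everything into the language of the lattice path traced out when one reads a ribbon, so that the claimed bijection becomes a transparent statement about row-size vectors (compositions).

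First I would record the dictionary between a ribbon and its reading word. Reading a ribbon of length $n$ from its bottom-left cell to its top-right cell along adjacent cells traces a path that uses only unit rightward steps ($R$) and unit upward steps ($U$): since the skew shape is connected and contains no $2\times 2$ block, one never moves left or down. Under the correspondence between the ribbon and its permutations, each $R$ step is an ascent and each $U$ step is a descent, so a double descent occurs exactly at two consecutive $U$ steps, and a ribbon of height $h$ uses exactly $h-1$ upward steps (one per change of row). Hence the elements of $\mathcal{R}_\emptyset(n)$ of height $h$ are precisely the ribbons of $n$ cells whose reading path has $h-1$ up-steps, no two of them consecutive.

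Next I would parametrize such ribbons by their row sizes. Writing the path as $R^{b_0}U R^{b_1}U\cdots U R^{b_{h-1}}$, the block $b_i$ is the number of rightward steps taken in the $(i{+}1)$-st row counted from the bottom, so that this row has $b_i+1$ cells. The no-double-descent condition is equivalent to $b_i\ge 1$ for every block strictly between two $U$'s, i.e. every internal row (rows $2,\dots,h-1$) contains at least two cells, while the two extreme rows need only one cell. Because consecutive rows of a ribbon overlap in exactly one column, the row-size vector $(r_1,\dots,r_h)$ reconstructs the ribbon uniquely; thus ribbons in $\mathcal{R}_\emptyset(n)$ of height $h$ correspond bijectively to integer vectors with $r_1,r_h\ge 1$, with $r_j\ge 2$ for internal $j$, and with $\sum_j r_j=n$. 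In this description $\mu(\emptyset,h)$ is exactly the componentwise-minimal vector $(1,2,\dots,2,1)$. With this in hand the proposition's map is easy to analyze: adding $a_j\ge 0$ cells to row $j$ of $\mu(\emptyset,h)$ produces the ribbon with row sizes $(1+a_1,2+a_2,\dots,2+a_{h-1},1+a_h)$, where $\sum_j a_j = n-|\mu(\emptyset,h)|$. I would then check well-definedness (adjoining a cell to a row only inserts an $R$ step, so it can neither create a new adjacent pair of $U$'s nor alter the height, keeping the output in $\mathcal{R}_\emptyset(n)$ at height $h$), injectivity (the number of cells added to a given row is recovered as its size minus the minimal row size), and surjectivity (the internal-rows-$\ge 2$ condition forces the row-size vector of any target ribbon to dominate $(1,2,\dots,2,1)$ componentwise, so it is realized by a unique nonnegative excess vector whose entries sum to $n-|\mu(\emptyset,h)|$).

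The step I expect to require the most care is the geometric dictionary of the first paragraph: rigorously justifying that the reading path is monotone with only $R$ and $U$ steps, that a double descent is equivalent to two adjacent $U$ steps, and---most importantly for the bijection---that an adjacent pair of $U$'s can arise only from an internal row consisting of a single cell, together with the fact that the overlap-in-one-column property makes the row-size vector a faithful invariant of the ribbon. Once these structural facts are nailed down the remainder is routine bookkeeping with compositions; as a sanity check, the number of admissible vectors at a fixed height is $\binom{n-h+1}{h-1}$, and summing over $h$ recovers the Fibonacci total of Theorem \ref{emptysetformula}.
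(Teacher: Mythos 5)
Your proof is correct, and it is genuinely more complete than the one in the paper. The paper's argument is a two-sentence assertion: it says that ``by the definition of minimal element, $\mu(\emptyset,h)$ must be contained within $\mathfrak{r}$'' and that the decomposition into $\mu(\emptyset,h)$ plus added squares is unique, without verifying either claim. In particular it never checks that the square-addition operation preserves the empty double descent set (well-definedness of the map), nor does it justify why every height-$h$ ribbon with empty double descent set must dominate the staircase $\mu(\emptyset,h)$ row by row --- containment is not literally part of the definition of a minimal element, which is defined only by having the fewest squares. Your lattice-path encoding supplies exactly the missing structure: identifying a ribbon with its row-size composition $(r_1,\dots,r_h)$, translating ``no double descent'' into ``every internal row has at least two cells,'' and recognizing $\mu(\emptyset,h)$ as the componentwise minimum $(1,2,\dots,2,1)$ makes the containment, the injectivity, and the surjectivity all transparent, and it also cleanly handles the degenerate heights $h=1,2$. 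The two approaches share the same underlying decomposition (minimal element plus row excesses), but yours replaces the paper's appeal to intuition with a faithful combinatorial invariant; the one point you flag as delicate --- that a double descent corresponds precisely to an internal row of size one, and that the row-size vector determines the ribbon because consecutive rows overlap in exactly one column --- is indeed the crux, and both facts hold. As a bonus, your count $\binom{n-h+1}{h-1}$ of admissible vectors at height $h$ is exactly the quantity the paper needs in the proof of Theorem \ref{emptysetformula}, so your argument proves the proposition and the enumeration in one pass.
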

	
	\begin{proof}
	Suppose we have an arbitrary element  $\mathfrak{r}$ of $\mathcal{R}_\emptyset(n)$ of height $h$ for some $n$. Then, by the definition of minimal element, $\mu(\emptyset,h)$ must be contained within $\mathfrak{r}$. In particular, $\mathfrak{r}$ can be uniquely obtained from $\mu(\emptyset, h)$ by adding $|\mathfrak{r}| - |\mu(\emptyset,h)| = n - |\mu(\emptyset, h)|$ squares to $\mu(\emptyset, h)$ in the correct rows. 
	\end{proof}
	
	 For example, suppose we want to construct an element of $\mathcal{R}_\emptyset(8)$ with height 4. Then we take $\mu(\emptyset,4)$, and because this already has 6 squares in it, we just add the 2 remaining squares to any 2 not necessarily distinct rows. The following diagram shows how this process works (added squares are shaded):

	\vspace{0.3cm}
	\begin{center}
	\begin{ytableau}
    \none & \none & \\
	\none & & \\
	& \\
	\\
	\end{ytableau}
	\begin{tabular}{c} 
    	\\\\$\scalebox{1.5}{\ensuremath{\rightarrow}}$
    \end{tabular}
	\begin{ytableau}
    \none & \none & \\
	\none & & & *(green) \\
	& \\
	& *(green)\\
	\end{ytableau}
	\begin{tabular}{c} 
    	\\\\$\scalebox{1.5}{\ensuremath{\rightarrow}}$
    \end{tabular}
    \begin{ytableau}
    \none & \none & \none & \none &\\
	\none & \none & & & \\
	\none & & \\
	& \\
	\end{ytableau}
	\end{center}
	\vspace{0.3cm}
	
	To simplify notation for later, we will use the notation $\ext_n(\mathfrak{m})$ to denote the set of rim hooks of length $n$ generated by a minimal element $\mathfrak{m}$, i.e. \textit{extensions} of $\mathfrak{m}$. That is, elements of $\ext_n(\mathfrak{m})$ are created by adding $n - |\mathfrak{m}|$ extra squares to $\mathfrak{m}$ through the process of square-addition as shown above.

	Now that we have built up an understanding of minimal elements, we can proceed with the proof of Theorem \ref{emptysetformula}.
    
    \begin{proof}[Proof of Theorem \ref{emptysetformula}]
    By Proposition \ref{minimalbijection}, if $M$ represents the set of all possible minimal elements of length at most $n$, then $\#\mathcal{R}_\emptyset(n) = \sum_{\mathfrak{m} \in M}\#\ext_n(\mathfrak{m})$, because any element of $\mathcal{R}_\emptyset(n)$ is generated by the minimal element of the same height.
    
     Thus, we begin by determining all the minimal elements of $\mathcal{R}_\emptyset(n)$. We start with the simple cases: $\mu(\emptyset,1)$ is just a single square; $\mu(\emptyset, 2)$ is the Young diagram given by $(1,1)$, and $\mu(\emptyset, 3)$ is the skew shape given by $(2,2,1)/(1)$. More generally, all minimal elements of height greater than 2 (and for double descent set $\emptyset$) have a staircase shape, where the top and bottom rows have 1 square, and the middle rows all have 2 squares.
    
     Next, we determine the largest minimal element that can generate an element of $\mathcal{R}_\emptyset(n)$. Let $\mathfrak{m} = \mu(\emptyset, h)$ be the desired minimal element. Then $|\mathfrak{m}| = 2h - 2$, so the maximal $h$ such that $|\mathfrak{m}|\leq n$ is $H = \left \lfloor \frac{n+2}{2} \right \rfloor$.
    
     Now that we know all the minimal elements that generate elements of $\mathcal{R}_\emptyset(n)$, we are almost done. We can simplify the summation at the beginning of this proof as follows: $$\#\mathcal{R}_\emptyset(n) = \sum_{\mathfrak{m} \in M}\#\ext_n(\mathfrak{m}) = \sum_{k = 1}^H\#\ext_n(\mu(\emptyset,k))$$ because all the possible minimal elements are the ones of heights ranging from 1 to $H = \left \lfloor \frac{n+2}{2} \right \rfloor$.
    
     For a given height $h$, the value of $\#\ext_n(\mu(\emptyset,h))$ is the number of ways to distribute $n - |\mu(\emptyset,h)|$ additional squares among the $h$ rows of $\mu(\emptyset,h)$. This is commonly known as the number of weak $h$-compositions of $n - |\mu(\emptyset,h)|$, and this is given by the formula $$\dbinom{(n - |\mu(\emptyset,h)|) + h - 1}{h - 1} = \dbinom{n - (2h - 2) + h - 1}{h - 1} = \dbinom{n - h + 1}{h - 1}.$$
    
    Combining this with the previous summation, we get the following formula: $$\#\mathcal{R}_\emptyset(n) = \ds \sum_{k = 1}^H\#\ext_n(\mu(\emptyset,k)) = \sum_{k=1}^H\dbinom{n - k + 1}{k - 1}.$$
    
    It is well-known that this sum is equivalent to the $(n+1)$st Fibonacci number (see OEIS \cite{fibNum}), giving the desired result.
    \end{proof}
    
    \begin{example}
    Let us compute $\#\mathcal{R}_\emptyset(6)$ by using Theorem \ref{emptysetformula} and also by listing out the rim hooks individually. Theorem \ref{emptysetformula} gives $\#\mathcal{R}_\emptyset(6) = F_{7} = 13.$ Next, we list the elements of $\mathcal{R}_\emptyset(6)$:
    
    \vspace{0.3cm}
	\begin{center}
	        \scalebox{0.7}{\ensuremath{\ydiagram{6} \hspace{1cm} \ydiagram{4+1,5} \hspace{1cm} \ydiagram{3+2,4} \hspace{1cm} \ydiagram{2+3,3} \hspace{1cm} \ydiagram{1+4,2} \hspace{1cm} \ydiagram{5,1} \hspace{1cm} }}
	\end{center}
	\vspace{0.3cm}
	\begin{center}
	        \scalebox{0.7}{\ensuremath{\ydiagram{3+1,2+2,3} \hspace{1cm} \ydiagram{2+2,1+2,2} \hspace{1cm} \ydiagram{3+1,1+3,2} \hspace{1cm} \ydiagram{1+3,2,1} \hspace{1cm} \ydiagram{2+2,3,1} \hspace{1cm} \ydiagram{3+1,4,1} \hspace{1cm} \ydiagram{2+1,1+2,2,1}}}
	\end{center}
	\vspace{0.3cm}
    Indeed, there are 13 rim hooks in $\mathcal{R}_\emptyset(6)$, matching up with the value given by Theorem \ref{emptysetformula} as expected.
    \end{example}
	
	\section{Circular Permutations}\label{other}
	
	 Here we briefly mention the topic of \textit{circular permutations}. Intuitively, a circular permutation $w$ of length $n$ is just a permutation in $\mathfrak{S}_n$ ``wrapped-around''; that is, we read $w = [w_1, w_2, ..., w_n]$ from left to right, but when $w_n$ is reached, we just return back to $w_1$. This allows us to define double descents at all indices $1, 2,...,n$ and not just $2,3,...,n$. For example, a double descent at $n$ would mean $w_{n-1} > w_n > w_1$. Now, we formally define the set of circular permutations $\mathfrak{C}_n$ as follows. Define the \textit{rotation} map to be $\rho : \mathfrak{S}_n \xrightarrow{\sim} \mathfrak{S}_n$ which maps a permutation $w = [w_1, w_2, ..., w_n]$ to $[w_n, w_1, ..., w_{n-1}]$. Then, the set of equivalence classes of $\mathfrak{S}_n$ under the equivalence relation $w \sim \rho(w)$ is $\mathfrak{C}_n$.
	
	 When we discuss the double descents of a permutation $w \in \mathfrak{S}_n$, we mean double descents at the usual indices, $2, 3, ..., n-1$. However, if $w$ is an element of $\mathfrak{C}_n$, then double descents may also include indices $1$ and $n$.
	
	\begin{theorem}
		The number of permutations in $\mathfrak{C}_n$ with no double descents is equal to $b_{n-1}$.
	\end{theorem}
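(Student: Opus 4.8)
The plan is to count the classes in $\mathfrak{C}_n$ without a circular double descent by selecting one canonical representative from each rotation orbit and then translating the circular condition into a purely linear one that is counted by $b_{n-1}$. First I would record two structural facts about the rotation map $\rho$. Since a permutation is injective, no nontrivial power of $\rho$ can fix an element of $\mathfrak{S}_n$ — periodicity $w_{i+k} = w_i$ with $0 < k < n$ would force repeated values — so $\rho$ acts freely and every orbit has exactly $n$ elements; in particular each class contains a \emph{unique} representative $w$ with $w_1 = n$, obtained by rotating the maximum into the first position. Moreover, rotating $w$ merely shifts every circular double-descent index cyclically, so the total number of circular double descents is constant on each orbit. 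Hence ``having no circular double descent'' is well-defined on $\mathfrak{C}_n$, and I may count the admissible classes by counting admissible representatives of the form $w_1 = n$.

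Next I would analyze the circular double descents of a representative with $w_1 = n$. Because $w_1 = n$ is the global maximum, the wrap-around conditions $w_n > w_1 > w_2$ at index $1$ and $w_{n-1} > w_n > w_1$ at index $n$ can never hold, so indices $1$ and $n$ are never circular double descents. At index $2$ the condition $w_1 > w_2 > w_3$ collapses to $w_2 > w_3$, since $w_1 = n > w_2$ automatically; and for $3 \leq i \leq n-1$ the condition $w_{i-1} > w_i > w_{i+1}$ is an ordinary (non-wrapping) double descent. Writing $v := [w_2, w_3, \dots, w_n]$, a permutation of $[n-1]$, the absence of any circular double descent is therefore equivalent to $v_1 < v_2$ together with $v$ having no ordinary double descent — which are exactly the two defining conditions of the sequence $b$.

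Finally, as $w$ ranges over the representatives with $w_1 = n$, the word $v$ ranges over all permutations of $[n-1]$, and the two conditions depend only on relative order; so by definition the number of admissible $v$ is $b_{n-1}$. Combined with the bijection between orbits and their canonical representatives, this yields the claim. I expect the only delicate point to be the index bookkeeping in the middle step: one must check carefully that the two wrap-around positions genuinely contribute nothing once $n$ is placed first, and that the shift $i \mapsto i-1$ carries precisely the interior double descents of $w$ onto the full admissible range of ordinary double descents of the length-$(n-1)$ word $v$, with no off-by-one error at either end. The freeness of $\rho$, and hence the uniqueness of the representative, should also be stated explicitly, since the entire reduction rests on it.
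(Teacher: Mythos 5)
Your proposal is correct and follows essentially the same route as the paper: pick the unique representative of each rotation class with $w_1 = n$, observe that placing the maximum first kills the two wrap-around conditions and turns the index-$2$ condition into ``no initial descent'' for the remaining word $v \in \mathfrak{S}_{n-1}$, and conclude the count is $b_{n-1}$. Your additional remarks on the freeness of $\rho$ and the rotation-invariance of the circular double-descent count are sound and make the reduction to canonical representatives slightly more explicit than the paper's version, but the argument is the same.
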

	\begin{proof}
		Each equivalence class defining $\mathfrak{C}_n$ has exactly one representative $w \in \mathfrak{S}_n$ satisfying $w_1 = n$. Therefore, we can count permutations in $\mathfrak{C}_n$ with no double descents by counting permutations in $\mathfrak{S}_n$ with first element $n$ that have no double descents (defined as usual, so at indices in $[2, n-1]$) and do not satisfy $w_{n-1} > w_n > w_1$ or $w_n > w_1 > w_2$. To construct such an element of $\mathfrak{S}_n$, we just take an element of $\mathfrak{S}_{n-1}$ with no double descents and no initial descent and put $n$ to the left of it. That is, if $u = [u_1, u_2, ..., u_{n-1}] \in$ $ \mathfrak{S}_{n-1}$ has no double descents and no initial descent, then $[n, u_1, u_2, ..., u_{n-1}]$ is the desired element of $\mathfrak{S}_n$. The no initial descent condition is required since $n > u_1$, as $u_1 \in [n-1]$, so this avoids a double descent at index $2$. Now we check that $[n, u_1, ..., u_{n-1}]$ has no double descents at all indices. A permutation of the form $[n, u_1, ..., u_{n-1}]$ has no double descents at indices $2,3,...,n-1$ by construction, and it also does not satisfy $w_{n-1} > w_n > w_1$ or $w_n > w_1 > w_2$ (i.e. has no double descents at indices $n$ and $1$) because $w_n < w_1$; $w_n \in [n-1]$ and $w_1 = n$, so $w_n$ must be less than $w_1$. Clearly, the number of such permutations is just the number of permutations in $\mathfrak{S}_{n-1}$ with no double descents and no initial descent, $b_{n-1}$.
	\end{proof}
	
	\section{Conjectures}\label{conjs}
	
	 All of the following conjectures come from observing patterns in computer-produced data tables of values of $dd(I;n)$ for various $I$ and $n$. 
	 
	 \begin{conjecture}
		The values $\{dd(\{i\}; n)\}_{i=1}^n$ are asymptotically equidistributed. Namely, for fixed $0 < \alpha < \beta < 1$, we have $\ds \sum_{\alpha n < i < \beta n}dd(\{i\}; n) \sim (\beta - \alpha)\sum_{i=2}^{n-1}dd(\{i\};n)$.
	\end{conjecture}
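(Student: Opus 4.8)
The plan is to pass from the absolute position $i$ of the double descent to a ``split point'' and then to exploit a complete cancellation of factorials in the resulting convolution. First I would recast the problem in terms of ascending runs: writing a permutation as its sequence of maximal increasing runs of lengths $(r_1,\dots,r_m)$, a double descent occurs exactly at an internal run of length $1$, so $\mathrm{DDes}(w)=\{i\}$ is equivalent to the run decomposition having exactly one internal run (i.e.\ a run other than $r_1$ or $r_m$) of length $1$, all other internal runs having length $\ge 2$, with that length-$1$ run sitting at position $i$. Consequently every $w$ counted by $dd(\{i\};n)$ splits uniquely into a left block $A=(w_1,\dots,w_{i-1})$ of size $i-1$, the single middle entry $w_i$, and a right block $B=(w_{i+1},\dots,w_n)$ of size $n-i$, where $A$ and $B$ are double-descent-free with boundary conditions (last run of $A$, first run of $B$ of length $\ge 2$ when nonempty) forced by the requirement that the decreasing triple $w_{i-1}>w_i>w_{i+1}$ neither extends nor creates a second double descent. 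As a warm-up and a tool for the edges, I would record the reverse-complement symmetry $w_k\mapsto n+1-w_{n+1-k}$, which sends a double descent at $i$ to one at $n+1-i$ and hence gives $dd(\{i\};n)=dd(\{n+1-i\};n)$.

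Next I would make the decomposition exact at the level of exponential generating functions. The two forced descents at the junctions (the conditions $w_{i-1}>w_i$ and $w_i>w_{i+1}$) couple the value sets of $A$, $w_i$, and $B$, and handling them is the technical core; I would use the Goulden--Jackson cluster method for the consecutive pattern $321$ (equivalently, $P$-partition bookkeeping for the forced descents) to express $dd(\{i\};n)$, for fixed sizes $(i-1,\,n-i)$, as an explicit bilinear form $\gamma\binom{n}{i-1,1,n-i}\,\ell_{i-1}\,r_{n-i}$ in refined double-descent-free counts $\ell_a,r_c$, the cluster contributing the position-independent constant $\gamma$. The key elementary computation is then that the multinomial coefficient exactly cancels the factorial growth of the blocks: if $\ell_a\sim L_0\,a!\,x_0^{-a}$ and $r_c\sim R_0\,c!\,x_0^{-c}$, then $\binom{n}{i-1,1,n-i}\ell_{i-1}r_{n-i}\sim L_0R_0\,n!\,x_0^{-(n-1)}$, which is independent of $i$; writing $\kappa=\gamma L_0R_0$ gives $dd(\{i\};n)\sim\kappa\,n!\,x_0^{-(n-1)}$.

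The growth constants come from singularity analysis, which here is especially clean. The EGF $E_0(x)=\tfrac{(\sqrt3/2)e^{x/2}}{\cos(\tfrac{\sqrt3}{2}x+\tfrac{\pi}{6})}$ for $dd(\emptyset;n)$, and likewise the refined variants $b_n$ (whose EGF $\tfrac12+\tfrac{\sqrt3}{2}\tan(\tfrac{\sqrt3}{2}x+\tfrac{\pi}{6})$ appears in Proposition \ref{2.2}) and the block counts $\ell_a,r_c$, all have a single dominant simple pole at the first zero $x_0=\tfrac{2\pi}{3\sqrt3}$ of $\cos(\tfrac{\sqrt3}{2}x+\tfrac{\pi}{6})$, with the next singularity at $2x_0$. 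The factor-$2$ spectral gap upgrades the asymptotics to $(\,\cdot\,)_k/k!=(\text{const})\,x_0^{-k}(1+O(2^{-k}))$, so the blocks satisfy their asymptotics with exponentially small, hence uniform, relative error. Feeding this into the previous paragraph yields $dd(\{i\};n)=\kappa\,n!\,x_0^{-(n-1)}(1+O(2^{-cn}))$ uniformly for $\epsilon n\le i\le(1-\epsilon)n$.

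Finally I would assemble the equidistribution. The uniform bulk estimate gives $\sum_{\alpha n<i<\beta n}dd(\{i\};n)=\kappa\,n!\,x_0^{-(n-1)}\big((\beta-\alpha)n+o(n)\big)$; the same estimate shows each individual term is $O(n!\,x_0^{-(n-1)})$ uniformly, including near the edges, where the per-term constant interpolates between $\kappa$ and the small-block values but stays bounded, so the two end-zones $i\le\epsilon n$ and $i\ge(1-\epsilon)n$, which are equal in total by the reverse-complement symmetry, together contribute $O(\epsilon n\cdot n!\,x_0^{-(n-1)})$. Hence $\sum_{i=2}^{n-1}dd(\{i\};n)\sim \kappa\,n\cdot n!\,x_0^{-(n-1)}$, and dividing gives the ratio $\beta-\alpha$ after letting $n\to\infty$ and then $\epsilon\to0$. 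The main obstacle is the second step: carrying out the cluster bookkeeping so that the forced junction descents produce an honestly position-independent constant $\kappa$ across the whole bulk, equivalently proving that the limiting relative order of the boundary entries decouples from the block sizes. By comparison the singularity analysis and the edge bounds are routine, the factor-$2$ gap doing most of the work.
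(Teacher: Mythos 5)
This statement is one of the paper's \emph{conjectures}: the paper offers no proof of it (only a heuristic remark), so there is nothing to compare your argument against, and the question is simply whether your proposal closes the problem. It does not. The overall architecture is sensible --- a uniform bulk asymptotic $dd(\{i\};n)\sim \kappa\, n!\,x_0^{-(n-1)}$ for $\epsilon n\le i\le(1-\epsilon)n$, plus a uniform upper bound to kill the edge zones, plus the (correct, and easily verified) reverse--complement symmetry $dd(\{i\};n)=dd(\{n+1-i\};n)$, would indeed yield the conjecture by the $\epsilon\to 0$ argument you describe. But the entire weight of the proof rests on the second step, and there you assert rather than prove the key identity. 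The exact form you write, $dd(\{i\};n)=\gamma\binom{n}{i-1,\,1,\,n-i}\ell_{i-1}r_{n-i}$ with a single position-independent constant $\gamma$, cannot be right as stated: after choosing the value sets for the blocks $A$ and $B$ and the middle value $v=w_i$, the junction conditions $w_{i-1}>v>w_{i+1}$ constrain the \emph{rank} of the last entry of $A$ and the first entry of $B$ relative to $v$, so the count is a genuine convolution over these ranks, namely a sum of products of rank-refined block counts, not a single product. (A sanity check: $dd(\{2\};4)=3$ while $\binom{4}{1,1,2}=12$, forcing $\gamma\ell_1 r_2=1/4$.) To make your argument work you must (i) introduce block counts refined by the rank of the boundary entry, (ii) prove that for long double-descent-free blocks this rank distribution converges to a limit law independent of the block length, with uniform (ideally exponentially small) error, and (iii) show the resulting convolution collapses asymptotically to a position-independent $\kappa$. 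You name this obstacle yourself, but naming it is not overcoming it; nothing in the Goulden--Jackson cluster method hands you (ii) for free, since clusters control pattern occurrences, not the boundary-rank statistics of the blocks between them.

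Two further points need attention even granting the bulk asymptotic. First, the uniform edge bound $dd(\{i\};n)=O(n!\,x_0^{-(n-1)})$ for \emph{all} $i$, including $i$ close to $2$ or $n-1$, is asserted via ``the per-term constant stays bounded'' but is not derived from anything; you would need either a rank-refined estimate valid when one block is short, or a separate argument bounding $\sum_{i}dd(\{i\};n)$ (the number of permutations with exactly one double descent) by $O(n\cdot n!\,x_0^{-(n-1)})$ via its own generating function. Second, your singularity analysis should be stated for the refined counts you actually need, not just for $b_n$ and $dd(\emptyset;n)$; the claim that every refined variant has the same dominant simple pole at $x_0=\frac{2\pi}{3\sqrt 3}$ with the next singularity at modulus $2x_0$ is plausible (the pole at $-2x_0$ of $\tan$ gives the gap) but must be checked for each refinement, especially the rank-refined ones, where one needs uniformity in the rank parameter as well. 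In short: this is a reasonable program for attacking the conjecture, and the reduction to a local limit theorem for the boundary ranks is a good way to isolate the difficulty, but as it stands the central factorization is unproven and the conjecture remains open.
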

	
	\begin{remark*}
	    This conjecture can be intuitively understood, as when a permutation becomes extremely long (i.e. for large $n$), the probability there is a double descent at index $k$ should be nearly the same as the probability of a double descent at index $k+1$.
	\end{remark*}

	\begin{conjecture}
		Given a fixed $n \in \N$, the numbers $dd(\{i\};n)$ for $2 \leq i < \left \lceil \frac{n}{2} \right \rceil$ follow a ``down up down up" pattern. Namely, $dd(\{i\}; n) > dd(\{i + 1\}; n)$ if $i$ is even, and $dd(\{i\}; n) < $ $dd(\{i + 1\}; n)$ if $i$ is odd.
	\end{conjecture}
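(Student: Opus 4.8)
The plan is to pass to the "at least" counts and then peel off the boundary. For a set $S\subseteq[2,n-1]$ of positions, let $g(S)$ be the number of $w\in\mathfrak{S}_n$ having a double descent at every index of $S$. A double descent at $j$ forces the two descents $w_{j-1}>w_j$ and $w_j>w_{j+1}$, so prescribing double descents on all of $S$ forces descents at every position of $S\cup(S-1)$ and constrains nothing else; cutting $[n]$ into the maximal blocks of consecutive positions joined by these forced descents yields block sizes $\lambda_1,\dots,\lambda_r$ and a direct count gives the multinomial $g(S)=\binom{n}{\lambda_1,\dots,\lambda_r}$. The key feature is that this value depends only on the \emph{shape} of $S$ (the multiset of blocks), not on where $S$ sits inside $[2,n-1]$, and it is invariant under reflection of $S$. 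Inclusion--exclusion over the unwanted double descents then gives $dd(\{i\};n)=\sum_{T\subseteq[2,n-1]\setminus\{i\}}(-1)^{|T|}g(\{i\}\cup T)$, whose leading term $g(\{i\})=n!/6$ is independent of $i$. Writing each $T$ as a set of displacements from $i$ and letting $G(E):=g(\{0\}\cup E)$ (shape-based, so $G(E)=G(-E)$), the whole count collapses to
\[
dd(\{i\};n)=\Psi(p,q):=\sum_{E\subseteq[-p,\,q]\setminus\{0\}}(-1)^{|E|}G(E),\qquad p=i-2,\ q=n-1-i,
\]
so that \emph{all} dependence on $i$ enters only through the window $[-p,q]$ of admissible displacements.

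Next I would exploit this to write the comparison as a telescoping difference. Since passing from $i$ to $i+1$ replaces $(p,q)$ by $(p+1,q-1)$, i.e. enlarges the window on the left by $\{-(p+1)\}$ and shrinks it on the right by $\{q\}$, splitting each sum according to whether the extreme displacement is used gives the clean identity
\[
dd(\{i+1\};n)-dd(\{i\};n)=\sum_{E\subseteq[-p,\,q-1]\setminus\{0\}}(-1)^{|E|}\big[\,G(E\cup\{q\})-G(E\cup\{-(p+1)\})\,\big].
\]
A single far, isolated double descent appends the same block of size $3$ to $G$ regardless of which end it occupies, so each bracket vanishes unless at least one of the two added displacements lies within distance $2$ of $\{0\}\cup E$; thus the difference is supported on configurations that interact with one of the two ends. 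The reverse--complement involution $w_k\mapsto n+1-w_{n+1-k}$, which sends a double descent at $j$ to one at $n+1-j$ and hence gives $dd(\{i\};n)=dd(\{n+1-i\};n)$ (equivalently $\Psi(p,q)=\Psi(q,p)$), both confirms that this difference vanishes at the centre $i=\tfrac{n}{2}$ and lets me restrict attention to $i<\lceil n/2\rceil$, where $p+1<q$.

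The remaining, and decisive, step is to resum the alternating sum over the "bulk" displacements of $E$. Grouping the free elements of $E$ that sit away from the two ends, the alternating signs enact precisely the inclusion--exclusion that forbids extra double descents, so these sums should collapse into the double-descent-free counts $b_m$ and $dd(\emptyset;m)$ of Section \ref{woddes}. Concretely, I would express the difference above in terms of convolutions of the sequences whose exponential generating functions are the $\tan$-type function of Proposition \ref{2.2} and the Elkies exponential generating function for $dd(\emptyset;n)$, thereby reducing the conjecture to a sign statement about the coefficients of an explicit generating function.

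The hard part will be that final sign analysis. Establishing that the resummed boundary difference is negative for even $i$ and positive for odd $i$ (the "down up down up" pattern), uniformly for every $i<\lceil n/2\rceil$ and every $n$, requires showing that a \emph{period-two} alternation survives on top of the \emph{period-six} oscillation carried by the underlying double-descent-free generating functions (the source of the $\sqrt3$ and $\pi/6$ in Proposition \ref{2.2}). I expect this to demand either a closed form for the difference generating function together with a proof that its coefficient sequence is genuinely, not merely asymptotically, alternating, or a sign-reversing involution on the inclusion--exclusion that cancels everything except a manifestly signed remainder; pinning down this parity is, I believe, the crux of any complete proof.
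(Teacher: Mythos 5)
The statement you are addressing appears in the paper only as a conjecture supported by computer data; the paper offers no proof, so there is nothing on that side to compare against. Your preliminary reductions are, as far as they go, correct and potentially useful: the identification of $g(S)$ with a multinomial coefficient depending only on the multiset of block sizes of $S\cup(S-1)$, the inclusion--exclusion formula for $dd(\{i\};n)$, the rewriting as a window sum $\Psi(p,q)$ with $p=i-2$ and $q=n-1-i$, the telescoping identity for $dd(\{i+1\};n)-dd(\{i\};n)$, and the reverse--complement symmetry $dd(\{i\};n)=dd(\{n+1-i\};n)$ all check out.

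However, there is a genuine and decisive gap: the entire content of the conjecture is the claim that $dd(\{i+1\};n)-dd(\{i\};n)$ has sign $(-1)^{i+1}$ throughout the range $2\leq i<\left\lceil \frac{n}{2}\right\rceil$, and you never establish this. Your final two paragraphs describe what a proof \emph{would} need --- a resummation of the boundary-interacting configurations into convolutions of $b_m$ and $dd(\emptyset;m)$, followed by a sign analysis --- and you explicitly concede that pinning down this parity is ``the crux of any complete proof.'' That crux \emph{is} the conjecture. What you have produced is a correct reformulation of the problem together with a plausible plan of attack, not a proof: the alternating sum supported near the two ends of the window is not shown to have a definite sign, and no mechanism (a sign-reversing involution, a closed form with provably alternating coefficients, or any other device) is supplied that would force the period-two alternation to survive on top of the period-six oscillation inherited from the generating functions of Section \ref{woddes}. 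As it stands, the statement remains open, though your window/telescoping framework is a reasonable place from which to attempt it.
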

	
	\begin{remark*}
	    This conjecture is very unexpected, as it seems to hold for all values of $n$ (numerically verified for some $n$). In particular, the ``down up down up" pattern persists even as the values of $dd(\{i\};n)$ approach uniform distribution.
	\end{remark*}
	
	\begin{conjecture}
		Let $n,i \in \N$ such that $i < \left \lceil \frac{n}{2} \right \rceil - 1$. If $i$ is even, we have $\frac{dd(\{i\}; n)}{dd(\{i+1\}; n)}>$ $\frac{dd(\{i+2\}; n)}{dd(\{i+3\}; n)}$, and if $i$ is odd, we have $\frac{dd(\{i\}; n)}{dd(\{i+1\}; n)}<$ $\frac{dd(\{i+2\}; n)}{dd(\{i+3\}; n)}$.
	\end{conjecture}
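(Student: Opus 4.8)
The plan is to convert the pair of ratio inequalities into a single sign statement about a bilinear cross-difference, and then to attack that sign via an exact inclusion--exclusion formula for $dd(\{i\};n)$ refined into a ``smooth part plus parity-oscillation'' decomposition. First I would record the reverse--complement symmetry: the map $w \mapsto w^{rc}$ with $w^{rc}_k = n+1-w_{n+1-k}$ sends $\text{DDes}(w)$ to $\{\, n+1-i : i \in \text{DDes}(w)\,\}$, so $dd(\{i\};n) = dd(\{n+1-i\};n)$. This explains the restriction to $i < \lceil n/2\rceil - 1$ and lets me assume all the quantities in sight lie in the left half, where each $dd(\{j\};n)$ is strictly positive. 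Writing $\Delta_i := dd(\{i\};n)\,dd(\{i+3\};n) - dd(\{i+1\};n)\,dd(\{i+2\};n)$, positivity makes the conjecture exactly equivalent to $\operatorname{sign}(\Delta_i) = (-1)^i$ (positive for even $i$): dividing through by $dd(\{i+1\};n)\,dd(\{i+3\};n) > 0$ recovers $\frac{dd(\{i\};n)}{dd(\{i+1\};n)} \gtrless \frac{dd(\{i+2\};n)}{dd(\{i+3\};n)}$. This Toeplitz-style reformulation is the organizing idea.

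Second, I would obtain a usable formula for $dd(\{i\};n)$. Since $\text{DDes}(w) \supseteq S$ is equivalent to $\text{Des}(w) \supseteq \bigcup_{j\in S}\{j-1,j\}$, Möbius inversion on the double-descent lattice gives $dd(\{i\};n) = \sum_{T \ni i}(-1)^{|T|-1}\, h\big(\bigcup_{j\in T}\{j-1,j\}\big)$, where $h(U) = \#\{\, w : \text{Des}(w) \supseteq U\,\}$ is itself an alternating sum of the multinomial descent counts $\beta(\cdot)$ of MacMahon. Alternatively, Theorem \ref{recursion1} supplies a recursion suitable for a two-variable induction in $(i,n)$. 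Either route should be massaged into a decomposition $dd(\{i\};n) = A(n)\,q(i,n)\,\big(1 + (-1)^i\,\varepsilon(i,n)\big)$ in which $q$ is a slowly varying ``envelope'' and $\varepsilon$ is the small parity-dependent oscillation detected numerically; substituting this into $\Delta_i$ should make the leading envelope terms cancel and leave the sign governed by $\varepsilon$.

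The hard part---and the crux of the whole argument---is the final sign determination. In $\Delta_i$ the dominant smooth contributions nearly annihilate, so its sign is a genuinely second-order effect, and pinning it down requires two-term asymptotics of $dd(\{i\};n)$ with error bounds uniform in $i$ across the entire range $2 \le i < \lceil n/2\rceil - 1$. I expect this to be strictly more delicate than the preceding ``down up down up'' conjecture, which already asserts the first-order oscillation; that conjecture is essentially the statement that the first differences of $dd(\{i\};n)$ alternate, whereas the present one is its second-order analogue. A realistic route is therefore to first establish sharp asymptotics (or sharp log-concavity-type bounds) for the single sequence $dd(\{i\};n)$---for which the inclusion--exclusion formula above, together with the known asymptotics of $dd(\emptyset;\cdot)$ and of the numbers $b_n$ from Section \ref{woddes}, is the natural starting point---and only then read off the sign of the cross-difference. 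If instead one pursues the inductive route through Theorem \ref{recursion1}, the obstacle migrates into controlling the auxiliary counts $c(\{m\};n)$, whose behavior is so far captured only by Conjecture \ref{c(n)estimate}; any such induction would need an unconditional replacement for that conjecture.
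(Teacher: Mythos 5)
This statement is left as an open conjecture in the paper, supported only by numerical data; the paper offers no proof of it, so there is nothing to compare your argument against on the paper's side. The relevant question is therefore only whether your proposal constitutes a proof on its own, and it does not. The sound parts are the two reductions at the start: the reverse--complement symmetry $dd(\{i\};n)=dd(\{n+1-i\};n)$ is correct and does explain the restriction to $i<\lceil n/2\rceil-1$, and (given positivity of the relevant $dd(\{j\};n)$, which holds in this range) the conjecture is indeed equivalent to $\operatorname{sign}\bigl(dd(\{i\};n)\,dd(\{i+3\};n)-dd(\{i+1\};n)\,dd(\{i+2\};n)\bigr)=(-1)^i$. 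These are genuine simplifications, but they are bookkeeping, not the substance of the claim.

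The gap is the entire analytic core. The decomposition $dd(\{i\};n)=A(n)\,q(i,n)\bigl(1+(-1)^i\varepsilon(i,n)\bigr)$ is postulated, not derived: neither the inclusion--exclusion formula nor Theorem \ref{recursion1} is actually shown to produce such a form, no bound on $\varepsilon$ uniform in $2\le i<\lceil n/2\rceil-1$ is obtained, and without one the cancellation of ``envelope'' terms in $\Delta_i$ tells you nothing about its sign --- precisely because, as you yourself note, the sign is a second-order effect that the leading asymptotics cannot see. Moreover the proposal quietly presupposes the first-order oscillation (the ``down up down up'' conjecture), which is also unproven, and the inductive route through Theorem \ref{recursion1} is blocked by the auxiliary counts $c(\{m\};n)$, whose behavior is known only conditionally via Conjecture \ref{c(n)estimate}. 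What you have written is a reasonable research program for attacking the conjecture, and the Toeplitz-style reformulation would be a worthwhile addition to the paper's discussion, but every step that would actually force the sign $(-1)^i$ is deferred to an unestablished asymptotic expansion. The statement remains a conjecture.
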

	
	\begin{remark*}
	    This conjecture illustrates how the values of $dd(\{i\};n)$ approach uniform distribution as $n$ becomes large. The values of the successive ratios $\frac{dd(\{i\}; n)}{dd(\{i+1\}; n)}$, which are approaching 1 as $dd(\{i\};n)$ reaches uniform distribution, are strictly decreasing toward 1 for successive even $i$, while these ratios are strictly increasing toward 1 for successive odd $i$.
	\end{remark*}
	
	\begin{conjecture}
		For fixed $i,j \in \Z_{\geq 2}$, the limit $\lim_{n \rightarrow \infty}\frac{dd(\{i\}; n)}{dd(\{j\}; n)}$ exists and is a positive number.
	\end{conjecture}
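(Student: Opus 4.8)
The plan is to pass to exponential generating functions and apply singularity analysis in the style of Flajolet--Odlyzko. Write $F_m(x) = \sum_{n \ge 0} dd(\{m\};n)\,\frac{x^n}{n!}$, and recall from Proposition \ref{2.2} that the EGF of $b_n$ is $B(x) = \frac12 + \frac{\sqrt3}{2}\tan\!\big(\frac{\sqrt3}{2}x + \frac{\pi}{6}\big)$. Both $B$ and the EGF of $dd(\emptyset;n)$ given by Elkies' formula are meromorphic on $\mathbb{C}$ with the same dominant singularity: the simple pole $\rho := \frac{2\pi}{3\sqrt3}$, the smallest positive zero of $\cos\!\big(\frac{\sqrt3}{2}x + \frac{\pi}{6}\big)$ (the other real zeros lie at $-2\rho$ and $4\rho$, so $\rho$ is uniquely closest to the origin). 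Hence $dd(\emptyset;n) \sim C_0\, n!\,\rho^{-n-1}$ and $b_n \sim C_0'\, n!\,\rho^{-n-1}$ with positive constants coming from the residues. The crux of the conjecture is that $\rho$ is a universal growth constant, so I would aim to show that every $F_m$ has its dominant singularity at this same $\rho$, of the same order.

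To build $F_m$ I would use the reformulation that $\text{DDes}(w) = \{m\}$ exactly when the maximal decreasing run of $w$ through position $m$ is the triple $(w_{m-1},w_m,w_{m+1})$ and every other maximal decreasing run has length at most $2$; equivalently, $\text{Des}(w) \supseteq \{m-1,m\}$ with $\{m-1,m\}$ the only pair of consecutive descents. This cuts $w$ into a head on positions $1,\dots,m$ (a no-double-descent pattern of fixed length $m$, with a prescribed ascent/descent pattern near its right end) and a tail on positions $m+1,\dots,n$ (a no-double-descent pattern with an initial ascent, of the $b$-type counted by $B$), subject to the junction inequality $w_m > w_{m+1}$ produced by the double descent. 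Because the head has bounded length, its contribution is an entire (indeed polynomial-scale) factor $h_m$, while the tail contributes the singular factor carrying the pole at $\rho$. I therefore expect $F_m(x)$ to factor as $h_m$ times a fixed $B$-type factor singular at $\rho$, giving $dd(\{m\};n) \sim c_m\, n!\,\rho^{-n-1}$ with $c_m$ proportional to $h_m(\rho)$.

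Granting such a factorization, the conclusion is immediate: $\frac{dd(\{i\};n)}{dd(\{j\};n)} \to \frac{c_i}{c_j}$, and this limit is positive because each $c_m$ is a nonvanishing residue-type constant attached to a genuinely nonempty family — for $n \ge m+1$ there exist permutations with double descent set exactly $\{m\}$, so $dd(\{m\};n) > 0$ and the leading constant cannot be zero.

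The main obstacle is the junction inequality $w_m > w_{m+1}$, which compares an actual label in the head with an actual label in the tail and so is \emph{not} an internal relative-order condition on either block; this is precisely what blocks a naive product of EGFs. I would handle it by refining the generating functions to track the boundary value (marking the available label at the cut, which converts the cross-block comparison into an integration/differentiation operation on the EGFs), or equivalently by encoding the run structure as a transfer operator and reading off its resolvent, whose poles are again the zeros of the same $\cos$ factor. Verifying that this refinement preserves both the location and the order of the dominant singularity for every fixed $m$, and that the resulting $c_m$ is strictly positive, is the technical heart of the argument; the subsequent transfer to coefficient asymptotics is then routine.
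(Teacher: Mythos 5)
This statement is one of the paper's \emph{conjectures}: the paper offers no proof of it, only numerical evidence from computer-generated ratios, so there is no argument of record to measure yours against. Judged on its own terms, what you have written is a credible research programme rather than a proof, and you are candid about this. The combinatorial reformulation is correct ($\mathrm{DDes}(w)=\{m\}$ iff $\mathrm{Des}(w)$ contains $\{m-1,m\}$ and no other consecutive pair), the dominant singularity $\rho = \frac{2\pi}{3\sqrt{3}}$ of $B(x)$ is correctly located, and the head/tail cut at position $m$ does isolate a single cross-block inequality $w_m > w_{m+1}$. But that inequality is precisely where every elementary attempt founders --- it is also why the paper's own Theorem \ref{recursion1} is forced to introduce the auxiliary quantity $c(I;n)$, about which the paper can only conjecture (Conjecture \ref{c(n)estimate}) that $c(\{m\};n)/dd(\{m\};n)$ converges. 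Naming the fix (``track the boundary value,'' ``transfer operator'') is not the same as carrying it out; until you exhibit the refined generating function and verify that it is meromorphic in a disk of radius strictly greater than $\rho$ with at worst a simple pole at $\rho$, the argument has no content at its central step.

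There is also a genuine logical error in the one step you do claim to close. You argue that $c_m > 0$ because $dd(\{m\};n) > 0$ for $n \ge m+1$. Positivity of the coefficients only rules out a \emph{negative} residue at $\rho$; it does not rule out a \emph{zero} residue, i.e., the possibility that $F_m$ is actually analytic at $\rho$ and $dd(\{m\};n)$ grows at a strictly smaller exponential rate --- in which case the limit in the conjecture could well be $0$ or $\infty$ for $i \neq j$, exactly as happens for ordinary descent polynomials. To close this you need a lower bound of the correct order, $dd(\{m\};n) \ge \varepsilon\, n!\,\rho^{-n}$ for some $\varepsilon > 0$ and all large $n$, for instance by injecting a positive proportion of the permutations counted by $dd(\emptyset;n)$ into $DD(\{m\};n)$ via a local surgery near position $m$; nothing of the sort appears in your write-up. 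In summary: right framework, correctly identified obstacle, but the obstacle is left unresolved and the positivity claim as stated is a non sequitur, so the conjecture remains open.
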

	
	\begin{remark*}
	This conjecture highlights a major difference between descents and double descents. According to Diaz-Lopez et al. \cite{DLHIOS}, $d(\{i\};n)$ is a polynomial of degree $i$, so $\lim_{n \rightarrow \infty}\frac{d(\{i\}; n)}{d(\{j\}; n)}$ is either $0$ or $\infty$ when $i \neq j$, whereas the corresponding limit for double descents is always a positive number.
	\end{remark*}
	
	In fact, we can generalize this conjecture:
	
	\begin{conjecture}
	    Let $I, J \subset \Z_{\geq 2}$ be two finite sets such that $dd(I;n) > 0$ and $dd(J;n) > 0$ for all but finitely many $n$. Then the limit $\lim_{n \to \infty}\frac{dd(I;n)}{dd(J;n)}$ exists and is a positive number.
	\end{conjecture}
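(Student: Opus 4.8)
The natural setting for this conjecture is analytic combinatorics applied to the exponential generating functions $F_I(x) := \sum_{n \ge 0} dd(I;n)\,\frac{x^n}{n!}$. The plan is to reduce the entire statement to three analytic facts about these series: that each $F_I$ continues meromorphically to a disk of radius strictly larger than $r := \frac{2\pi}{3\sqrt{3}}$, that $r$ is its \emph{unique} dominant singularity and is a \emph{simple} pole for every admissible $I$, and that the associated leading constant $\rho_I$ (the negative of the residue of $F_I$ at $r$) is a strictly positive real number. Granting these, standard singularity analysis (the transfer theorem for meromorphic generating functions) gives $dd(I;n) \sim \rho_I\, n!\, r^{-n-1}$, so that
\[ \frac{dd(I;n)}{dd(J;n)} \longrightarrow \frac{\rho_I}{\rho_J}, \]
a positive number, which is exactly the assertion. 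The reason to expect a \emph{common} $r$ and a \emph{uniform} pole order is structural: since $I$ is finite, every double descent of a permutation counted by $dd(I;n)$ occurs at an index $\le \max(I)$, so outside a bounded initial window the permutation is double-descent-free. The growth of $dd(I;n)$ is therefore governed by the double-descent-free generating functions of Section~\ref{woddes}, namely $S(x) = \tfrac12 + \tfrac{\sqrt3}{2}\tan\!\big(\tfrac{\sqrt3}{2}x+\tfrac{\pi}{6}\big)$ and $F_\emptyset(x) = \frac{(\sqrt3/2)e^{x/2}}{\cos(\frac{\sqrt3}{2}x+\frac{\pi}{6})}$, both of which have a simple pole at $r$, the smallest-modulus zero of $\cos\!\big(\tfrac{\sqrt3}{2}x+\tfrac{\pi}{6}\big)$, and no other singularity on $|x|=r$ (the next zero sits at $-2r$).

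The engine for the induction will be a functional equation generalizing Theorem~\ref{recursion1}. Removing the largest entry of a permutation with double descent set $I$ should, exactly as in the singleton proof, produce a first-order linear relation
\[ F_I'(x) - S(x)\,F_I(x) = g_I(x), \]
where the forcing term $g_I$ is assembled from the series $F_{I'}$ with $\max(I') < \max(I)$, together with the known $\emptyset$-series and the $c(\{\cdot\};\cdot)$-series, each multiplied by \emph{entire} (in fact polynomial) factors that encode the bounded head. Using the identity $F_\emptyset' = S\,F_\emptyset$ — which is precisely the Munarini recursion in generating-function form — the integrating factor is $F_\emptyset^{-1}$, an entire function with a simple zero at $r$. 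Hence $\big(F_I/F_\emptyset\big)' = g_I/F_\emptyset$, and
\[ F_I(x) = F_\emptyset(x)\Big(c_I + \int_0^x \frac{g_I(t)}{F_\emptyset(t)}\,dt\Big). \]
Inducting on $\max(I)$ and then on $|I|$: if each $F_{I'}$ occurring in $g_I$ has at most a simple pole at $r$, then $g_I$ does too, so $g_I/F_\emptyset$ is regular at $r$, its antiderivative is analytic at $r$, and $F_I$ inherits exactly the simple pole of the outer factor $F_\emptyset$. In particular $\rho_I$ equals the value of the bracketed factor at $r$ times the residue of $F_\emptyset$ at $r$.

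Positivity of the limit then comes for free from the hypothesis. Since $dd(I;n)\ge 0$ and $dd(I;n)>0$ for all large $n$, and the dominant singularity is a real simple pole producing growth of the universal order $n!\,r^{-n-1}$, the constant $\rho_I$ is real and nonnegative, and eventual strict positivity forces $\rho_I>0$: a vanishing bracket at $r$ would cancel the pole and leave growth strictly slower than $n!\,r^{-n-1}$, which is incompatible with $dd(I;n)$ being eventually positive of the universal order. This gives $\rho_I,\rho_J>0$ and the positive limit $\rho_I/\rho_J$.

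The main obstacle is making this engine rigorous uniformly in $I$. The first difficulty is proving the exact functional equation when $|I|>1$: deleting the maximum entry can simultaneously affect several double descents, so the decomposition behind Theorem~\ref{recursion1} must be re-derived with care, and — crucially — one must verify that $g_I$ involves only lower-indexed series multiplied by \emph{entire} factors, never a product of two pole-bearing series, since such a product would raise the pole order and break the uniformity that the argument needs. The second and deeper difficulty is establishing the nonvanishing $c_I + \int_0^r g_I/F_\emptyset \neq 0$, i.e. $\rho_I\neq 0$, for \emph{every} admissible $I$ rather than for the finitely many one can check by hand; this is exactly the point at which the eventual-positivity hypothesis must be converted into an analytic statement, and I expect it to be the crux of the proof. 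A secondary, more routine obstacle is confirming, uniformly in $I$, that no additional dominant singularities are created on $|x|=r$; this should follow from the fact that every singularity of $F_I$ traces back to a zero of $\cos\!\big(\tfrac{\sqrt3}{2}x+\tfrac{\pi}{6}\big)$, whose unique smallest-modulus zero is the simple one at $r$.
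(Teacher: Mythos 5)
First, a point of calibration: this statement is one of the paper's \emph{conjectures} (Section \ref{conjs}); the paper offers no proof, only numerical evidence and plots. So your proposal cannot be measured against an argument in the paper --- it has to stand on its own, and as written it is a research programme rather than a proof. The analytic framework is sensible and, in my view, the right one: the dominant singularity $r=\tfrac{2\pi}{3\sqrt3}$ of the Section \ref{woddes} generating functions is correctly identified, the integrating-factor trick using $F_\emptyset'=S\,F_\emptyset$ is clean, and the expected asymptotic $dd(I;n)\sim\rho_I\,n!\,r^{-n-1}$ is almost certainly the truth. You also correctly flag the two places where the real work lives: deriving the functional equation for $|I|>1$ (here the structure actually helps you more than you fear --- since every element of $I$ is at most $\max(I)$, inserting the maximal value at a position beyond $\max(I)+1$ always splits the permutation into ``all of $I$ on the left'' and ``double-descent-free on the right,'' so the only term with two unbounded factors is the homogeneous one $S\,F_I$, and $g_I$ should indeed be a finite sum of entire factors times single lower-indexed series), and establishing that no new singularities appear on $|x|=r$.

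The genuine gap is the positivity step, and the paragraph claiming it ``comes for free from the hypothesis'' is circular. Eventual positivity of $dd(I;n)$ does \emph{not} force $\rho_I>0$: if the bracketed factor vanished at $r$, then $F_I$ would simply be analytic at $r$ and $dd(I;n)$ would grow at a strictly smaller exponential rate governed by the next singularity --- while remaining perfectly capable of being positive for every $n$. Your phrase ``eventually positive \emph{of the universal order}'' smuggles in exactly the conclusion you are trying to draw. A naive combinatorial lower bound (prepend a fixed pattern on the top $\max(I)+2$ values to a double-descent-free permutation) only yields $dd(I;n)\gtrsim b_{n-\max(I)-2}$, which falls short of $n!\,r^{-n}$ by a polynomial factor, so it cannot close this gap either. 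A more promising route, which your own setup makes available, is to exploit the sign structure of the functional equation: since $dd(I;0)=0$ one has $c_I=0$, so $F_I(x)=F_\emptyset(x)\int_0^x g_I(t)/F_\emptyset(t)\,dt$, and if you can show that $g_I$ has nonnegative coefficients (it is assembled from counting sequences, as in Theorem \ref{recursion1}) and is not identically zero, then the integral evaluated at $r$ is strictly positive and $\rho_I>0$ follows. Until the functional equation for general $I$ is actually derived and this positivity argument is carried out, the conjecture remains open; your proposal is a credible blueprint, not a proof.
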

	
	The following graphs show values of $\frac{dd(I;n)}{dd(J;n)}$ plotted with respect to $n$ for various $I$ and $J$:
	
	\begin{figure}[h]
    \centering
    \subfloat[$I = \{5\}$ and $J = \{2,3,4\}$]{{\includegraphics[width=7cm, keepaspectratio]{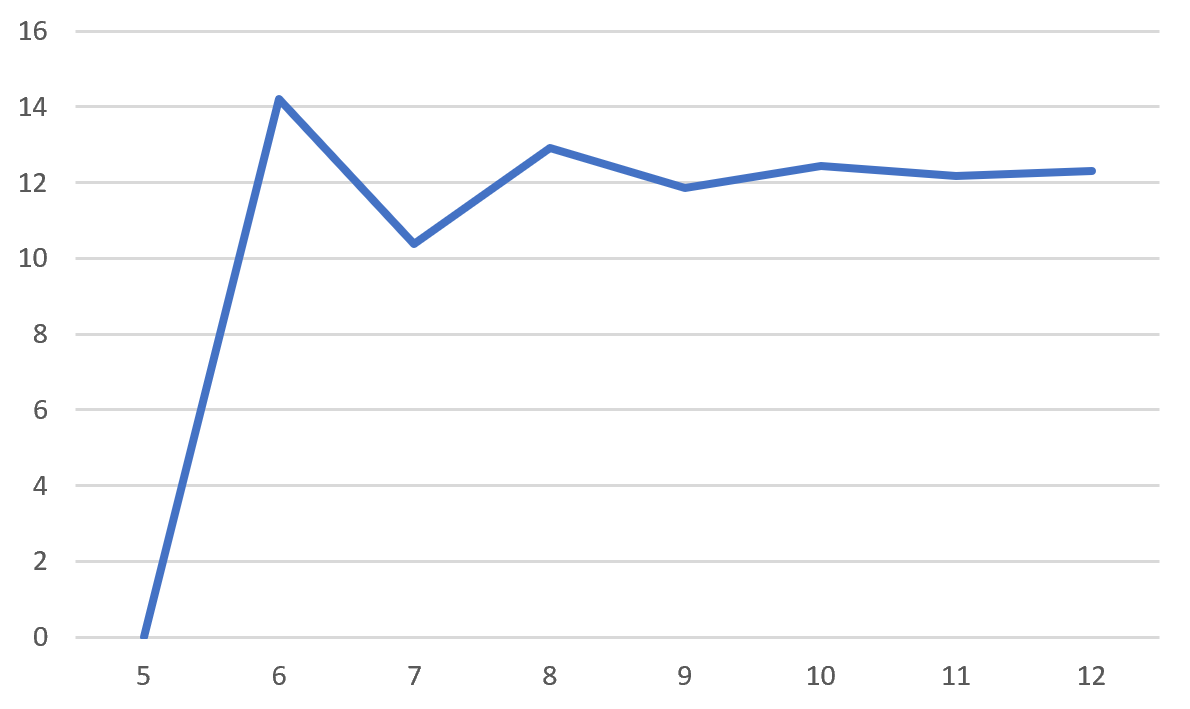}}}
    \qquad
    \subfloat[$I = \{3\}$ and $J = \{2,3\}$]{{\includegraphics[width=7cm, keepaspectratio]{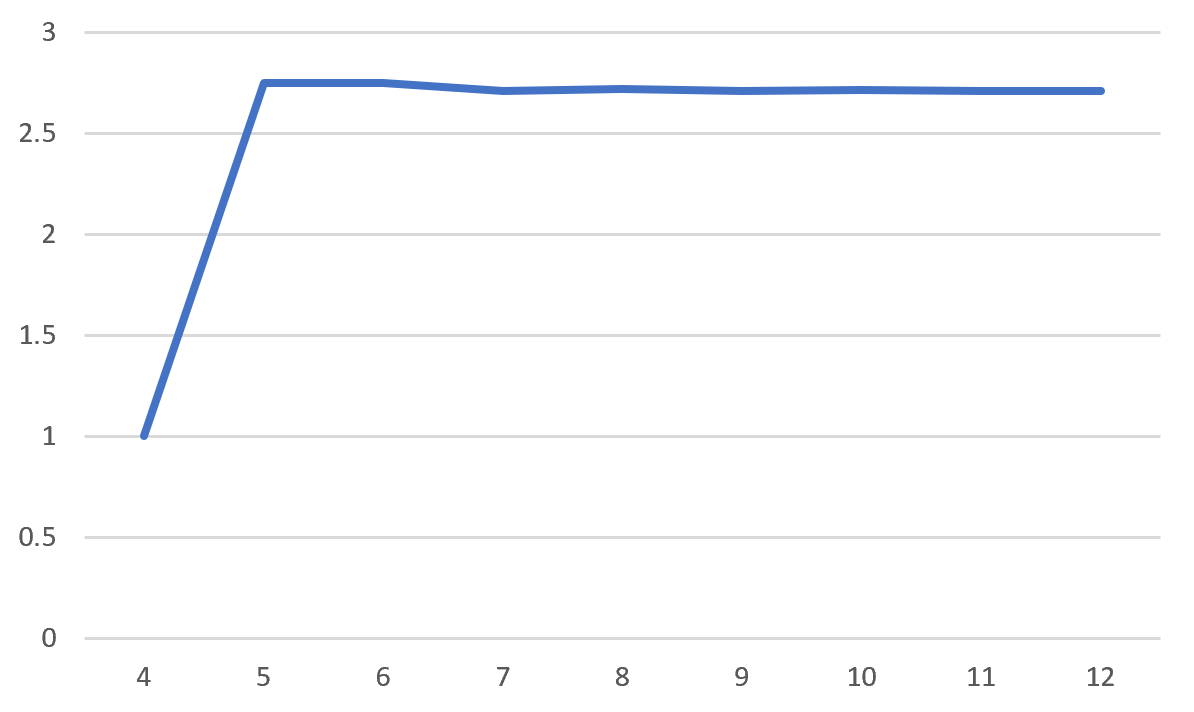}}}
    \newline
    \centering
    \subfloat[$I = \emptyset$ and $J = \{2,5\}$]{{\includegraphics[width=7cm, keepaspectratio]{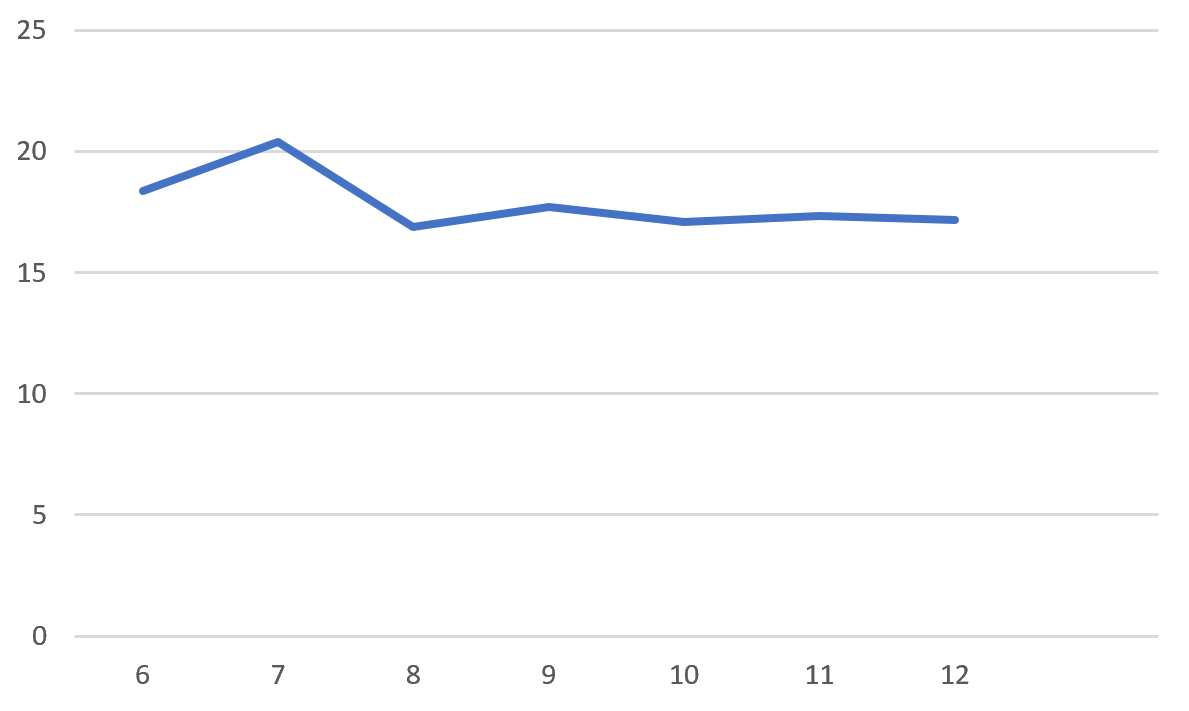}}}
    \qquad
    \subfloat[$I = \{2\}$ and $J = \{4\}$]{{\includegraphics[width=7cm, keepaspectratio]{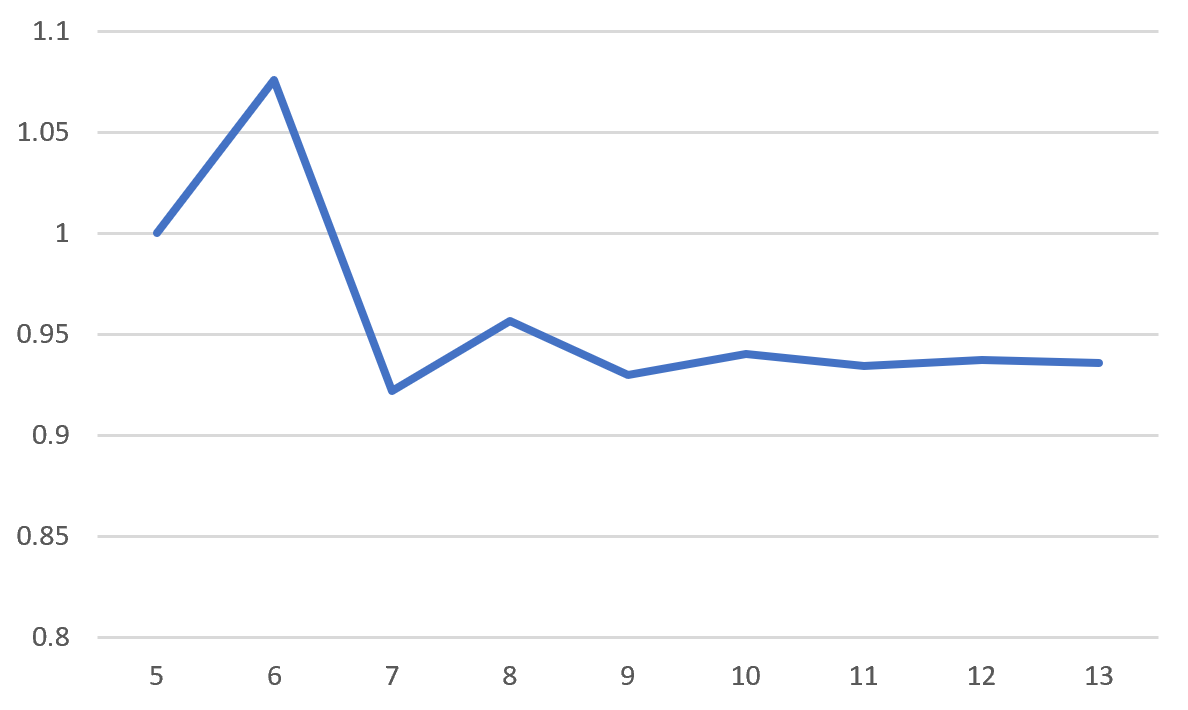}}}
    \label{fig:example}
    \end{figure}
	
	Each graph demonstrates that $\frac{dd(I;n)}{dd(J;n)}$ converges; in particular, each ratio converges alternately.

    \section{Future Work}
	 It might be possible to establish lower and upper bounds on $dd(I;n)$ by using Naruse's hook-length formula \cite{naruse} for skew shapes as well as Proposition \ref{rhrecprop}. Let $I$ be a double descent set. By definition of $\mathcal{R}_I(n)$, we have $$dd(I;n) = \sum_{\mathfrak{r}\in\mathcal{R}_I(n)}f^{\mathfrak{r}},$$ where $f^{\mathfrak{r}}$ is the number of rim hook tableaux of $\mathfrak{r}$. Then, we have the following bounds:$$ \inf_{\mathfrak{r}\in\mathcal{R}_I(n)}f^{\mathfrak{r}}\cdot\#\mathcal{R}_I(n) \leq dd(I;n) \leq \sup_{\mathfrak{r}\in\mathcal{R}_I(n)}f^{\mathfrak{r}}\cdot\#\mathcal{R}_I(n).$$
	
	 With the recursion given in Proposition \ref{rhrecprop}, one can determine $\#\mathcal{R}_I(n)$ as long as the initial conditions for the recursion are computed. For example, we have already determined the initial conditions for singleton double descent sets, allowing us to formulate Theorem \ref{rimhookrec}.
	
	 To evaluate $ \inf_{\mathfrak{r}\in\mathcal{R}_I(n)}f^{\mathfrak{r}}$ and $\sup_{\mathfrak{r}\in\mathcal{R}_I(n)}f^{\mathfrak{r}}$, one might be able to use Naruse's hook-length formula, which is as follows: $$f^{\lambda/\mu} = |\lambda/\mu|!\Bigg[\sum_{D \in \mathcal{E}(\lambda/\mu)}\Bigg(\prod_{c \in \lambda/D}\dfrac{1}{h(c)}\Bigg)\Bigg],$$ where $\lambda/\mu$ is a skew shape, and $\mathcal{E}(\lambda/\mu)$ is the set of \textit{excited diagrams} of $\lambda/\mu$, and $h(c)$ is the hook-length of a square $c$ as calculated in $\lambda$. More explanation on this formula can be found in the literature \cite{idk2, idk1}.
    
    \begin{flushleft}
	\section{Acknowledgments}
	 The author would first like to thank Pakawut Jiradilok (MIT) for his guidance and advice in this research. The author would also like to thank Yongyi Chen (MIT) and Dr. Tanya Khovanova (MIT) for their proofreading of this paper.	The author would finally like to thank Professor Pavel Etingof (MIT), Dr. Slava Gerovitch (MIT), and Dr. Khovanova for providing this research opportunity at the MIT PRIMES program. The author is especially grateful for Dr. Khovanova's suggestions regarding double descents.
	 
	 \textit{This material is based upon work supported by the National Science Foundation under Grant No. 1916120.}
	 \end{flushleft}
	\newpage
	\begin{flushleft}
	
	\end{flushleft}
\end{document}